\newcommand{\cN}{\mathcal{N}}
\newcommand{\cG}{\mathcal{G}}
\newcommand{\cE}{\mathcal{E}}
\newcommand{\cV}{\mathcal{V}}
\newcommand{\cU}{\mathcal{U}}
\newcommand{\slp}{\mathcal{S}}
\newcommand{\dlp}{\mathcal{D}}
\newcommand{\slo}{S}
\newcommand{\dlo}{D}
\newcommand{\dloa}{D'}
\newcommand{\hso}{H}
\newcommand{\ifun}{\mathbb{1}} 
\newcommand{\vint}{\mathcal{V}^\circ}
\newcommand{\real}{\mathbb{R}}
\newcommand{\complex}{\mathbb{C}}
\newcommand{\comm}[1]{\llbracket#1\rrbracket}
\newcommand{\expect}{\mathbb{E}}
\newcommand{\proba}{\mathbb{P}}
\newcommand{\diag}{\text{diag}\,}
\newtheorem{theorem}{\bf Theorem}[section]
\newtheorem{lemma}{\bf Lemma}[section]
\newtheorem{corollary}{\bf Corollary}[section]
\newtheorem{remark}{\bf Remark}[section]
\newtheorem{definition}{\bf Definition}[section]
\begin{document}

\title{A potential theory on weighted graphs\tnoteref{t1}}

\tnotetext[t1]{TD and FGV were supported by National Science Foundation Grants
DMS-2008610 and DMS-2136198.}

\author[1]{Trent DeGiovanni\fnref{fn1}}

\author[2]{Fernando Guevara Vasquez\corref{cor1}}
\ead{fguevara@math.utah.edu}

\cortext[cor1]{Corresponding author}

\affiliation[1]{organization={Department of Mathematics,
Dartmouth College},
city={Hanover},
postcode={03755},
state={NH},
country={USA}}

\affiliation[2]{organization={Department of Mathematics,
University of Utah},
city={Salt Lake City},
postcode={84112},
state={UT},
country={USA}}

\fntext[fn1]{Formerly: Department of Mathematics, University of Utah,
UT, 84112, USA}

\begin{abstract} 
We present an analog to classic potential theory on weighted graphs.
With nodes partitioned into exterior, boundary and interior nodes and an
appropriate decomposition of the Laplacian, we define discrete analogues
to the trace operators, the single and double layer potential operators,
and the boundary layer operators. As in the continuum, these operators
can represent exterior or interior harmonic functions with different
boundary conditions. The formalism we introduce includes a discrete
Calder\'on calculus and brings some well known results from potential
theory to weighted graphs, e.g. on the spectrum of the
Neumann-Poincar\'e operator. We illustrate the formalism with a cloaking
strategy on weighted graphs which allows to hide an anomaly from the
perspective of electrical measurements made away from the anomaly. 
\end{abstract}

\begin{keyword}
Potential theory \sep Green identities \sep Calder\'on calculus,
Dirichlet to Neumann map \sep Neumann-Poincar\'e Operator \sep 
Active cloaking
\MSC[2010]{
31C20, 
65M80, 
31B10 
}
\end{keyword}

\maketitle

\section{Introduction}
\label{sec:intro}
There are several techniques for partial differential equations (PDE)
that can be put under the umbrella of ``potential theory''. The key
feature is that they can reduce a PDE in $d$ dimensions to an integral
equation on a $d-1$ dimensional boundary. The dimension reduction
enables efficient methods such as the boundary element method, which are
especially attractive to solve scattering problems in free space. Our
goal here is to introduce similar techniques to weighted graphs, with as
few assumptions as possible on the graph, so that many results from
continuum potential theory can be carried out with few modifications on
weighted graphs. Although this is not the first ``discrete potential
theory'' (see \cref{sec:rev} for similar techniques), we believe that
our operator to operator approach comes close to this lofty goal, as it
leads to a Calder\'on calculus that is essentially the same as in the
continuum. By ``Calder\'on calculus'' we mean the exact relations that
tie the different potential operators and boundary potential operators,
e.g. jump conditions, Calder\'on projector, Plemelj's symmetrization
principle etc. A brief review for the Laplace equation in the continuum
is included for convenience in \cref{sec:cont}.  From a practical point
of view, a discrete potential theory that mimics the continuum one, can
serve as a foundation for numerical methods for boundary integral
equations that respect the structure of the continuum problem, as for
example \cite{Dominguez:2014:FDC,Dominguez:2014:NFC,Betcke:2021:BEMPP}.  
A discrete approach to potential theory is also desirable from a
pedagogical perspective as an introduction to potential theory that
requires only an understanding of the weighted graph Laplacian and some
linear algebra.

\subsection{Continuum potential theory for the Laplace equation}
\label{sec:cont}
We recall some results from continuum potential theory (see e.g.
\cite{Colton:2013:IEM,Steinbach:2008:NAM}).  The notation of this
section is self-contained and we intentionally write the operators of
potential theory and their discrete analogues in the same way. Since our
only objective is to highlight similarities between the continuum and
discrete potential theories, we omit important details such as the
spaces on which the operators are defined and boundary regularity
considerations.

We work on an open bounded domain $\Omega^- \in \mathbb{R}^n$ with a
smooth boundary $\partial \Omega$ and denote by $\Omega^+ = \mathbb{R}^n
\setminus \overline{\Omega^-}$. We shall use the convention
were a superscript ``$-$'' means inside $\partial\Omega$ and superscript
``$+$'' means outside $\partial\Omega$. Let $G$ be the Green function
for the Laplace equation in $\mathbb{R}^n$, i.e. a function $G(x)$
satisfying $\Delta G= -\delta(x)$ and vanishing as $|x| \to \infty$,
where $\Delta = \partial_1^2 + \ldots + \partial_n^2$ is the Laplacian
and $\delta$ is the Dirac delta distribution.  The single $\slp$ and
double  $\dlp$ layer potential operators are linear operators mapping
sufficiently regular functions (sometimes referred to as boundary
densities) on $\partial \Omega$ to functions on $\mathbb{R}^n \setminus
\partial\Omega$ as follows:
\[
\begin{aligned}
(\slp(\varphi))(x) &= \int_{\partial \Omega} G(x-y)\varphi(y)dS
~\text{and}~\\
(\dlp(\varphi))(x) &= \int_{\partial \Omega} \nu(y) \cdot \nabla_y G (x-y) \varphi(y) dS,
\end{aligned}
\]
where $\nu$ is the outward-pointing normal vector to $\partial \Omega$.
These layer potential operators can be used to write the following
reproduction formulas or Green identities. If $(\Delta u)|_{\Omega^-} = 0$
then we have the interior reproduction formula
\begin{equation}\label{eqn:green3}
 \slp(\gamma_0^- u) -\dlp (\gamma_1^- u) = \begin{cases}
    0 & \text{if}~x \in \Omega^+, ~\text{and}\\
    u & \text{if}~x \in \Omega^-,
\end{cases}
\end{equation}
where $\gamma_0^+$ (resp. $\gamma_0^-$) is the exterior (resp. interior)
Dirichlet trace operator, i.e. for $x\in \partial\Omega$: $(\gamma_0^\pm
u)(x) = \lim_{\epsilon \to 0^+} u(x\pm\epsilon \nu(x))$. The exterior
(resp. interior) Neumann trace operator is $\gamma_1^+$ (resp.
$\gamma_1^-$) and is given for $x\in\partial\Omega$ by
\[
 (\gamma_1^\pm u)(x) = \lim_{\epsilon \to 0^+} \frac{u(x\pm \epsilon \nu(x)) - u(x)}{\epsilon}.
\]
Also if $(\Delta u)|_{\Omega^+}= 0$ and $u$ decays sufficiently fast as
$|x| \to \infty$, we have the exterior reproduction formula:
\begin{equation}\label{eqn:green4}
 -\slp(\gamma_0^+ u) +\dlp (\gamma_1^+ u) = \begin{cases}
    u & \text{if}~x \in \Omega^+,~\text{and} \\
    0 & \text{if}~xx \in \Omega^-.
\end{cases}
\end{equation}
The Calder\'on calculus involves the following linear operators that map
densities on the boundary to densities on the boundary:
\begin{enumerate}[i.]
 \setlength{\itemsep}{0pt}
 \item Single layer operator: $\slo = \{\gamma_0\} \slp$,
 \item Double layer operator: $\dlo = \{\gamma_0\} \dlp$,
 \item Adjoint double layer operator: $\dloa = \{\gamma_1\} \slp$,
 \item Hypersingular operator: $\hso = -\{\gamma_1\} \dlp$,
\end{enumerate}
where $\{\gamma_i\} = \frac{1}{2} (\gamma_i^+ + \gamma_i^-)$, $i=0,1$. The
operator $\dloa$ is also referred to as the \emph{Neumann-Poincar\'e operator}.
These four fundamental operators satisfy the following jump formulas:
\begin{equation}
    \begin{aligned}
    \gamma_0^+ \slp &= \gamma_0^- \slp  =
    \slo,\\
    \gamma_1^\pm \slp &= \mp\frac{I}{2} + D',\\
    \gamma_0^\pm \dlp &= \pm\frac{I}{2} + D,\\
    -\gamma_1^+ \dlp & = -\gamma_1^- \dlp = H,
    \end{aligned}
    \label{eq:jump:cont}
\end{equation}
where $I$ is the identity. Finally define $P_\pm = \frac{I}{2}
\mp C$  where $C$ is given by
\begin{equation}
    C = \begin{bmatrix} 
    -\dlo & \slo \\
    \hso & \dloa
    \end{bmatrix}.
\end{equation}
The operator $P_-$ (resp. $P_+)$ is known as the interior (resp.  exterior)
\emph{Calder\'on projector} and satisfies $P_\pm^2 = P_\pm$. The relevance
of the Calder\'on projectors is that they give boundary integral equations
satisfied by the Cauchy data for $u^-$ harmonic on $\Omega^-$ (for the
interior Calder\'on projector) and $u^+$ harmonic on $\Omega^+$ (for the exterior Calder\'on
projector), namely
\begin{equation}
    P_\pm 
    \begin{bmatrix}
        \gamma_0^\pm u^\pm \\ \gamma_1^\pm u^\pm
    \end{bmatrix}
    =
    \begin{bmatrix}
        \gamma_0^\pm u^\pm \\ \gamma_1^\pm u^\pm
    \end{bmatrix}.
\end{equation}
As we shall see, the discrete potential formalism that we introduce has analog
properties to the ones presented here. Further analogue properties are presented
later in their own context.

\subsection{Previous work on discrete potential theory}
\label{sec:rev}
Discrete potential theory has been formulated previously in different
contexts. For example Green functions, Green identities and boundary
reproduction formulas are well known for weighted graphs, albeit
without single and double layer potentials, jump relations or a discrete
Calder\'on calculus \cite{Bendito:2007:PTB,Bendito:2008:BVP}. A discrete
potential theory has also been derived on infinite lattices
\cite{Bhamidipati:2021:FIM} using boundary algebraic equations
\cite{Gunnar:2009:BAE}, which are an analogue of the boundary integral
equations in the continuum, and that define boundary operators mapping
boundary densities to the rest of the lattice by basing the densities on
edges. Unfortunately, this leads to a formulation of the discrete
potential theory which is inconsistent with certain results in the
continuum (namely jump relations, see e.g. \cite{Colton:2013:IEM}).
These infinite lattice methods are closely related to the method of
difference potentials, a discretization approach for continuum problems
on regular grids that (among other features) can get higher order
accuracy boundary reproductions of fields by implicitly representing
the boundary on a discretization nodes that are in a neighborhood of the
boundary \cite{Ryabenkii:2001:MDP}. Difference potential methods have
also been adapted to have an explicit formulation of the double-layer
potential \cite{Gurlebeck:2002:FDP}. A key difference between the
lattice potential theory and the method of difference potentials is that
in the former, the boundary is defined on the lattice nodes. This allows
for consistency with the continuum potential theory in terms of e.g.
jump relations. We also point out a discrete Calder\'on calculus that is
derived as a discretization of the continuum Helmholtz and elasticity
equations
\cite{Dominguez:2014:FDC,Dominguez:2014:NFC,Dominguez:2015:FDC}, where
the jump relations are satisfied by the discretization.  The formulation
we present here considers boundary potentials as node-valued quantities
and applies directly to weighted graphs.

\subsection{Contents}
\label{sec:contents}
We start in \cref{sec:harm} by reviewing harmonic and Green functions on
weighted graphs. The potential theory on weighted graphs that we propose is
derived in \cref{sec:dpt}. In \cref{sec:appl} we show how certain
properties that are well known in the continuum hold in on weighted
graphs. In particular we show how to reduce the exterior/interior
Dirichlet or Neumann boundary value problems to integral equations.
An application to active cloaking in graphs is presented in
\cref{sec:cloaking}.

\section{Harmonic functions on graphs}
\label{sec:harm}
We fix some notation in \cref{sec:setup}. Then we review the graph
Laplacian in \cref{sec:gl} and the graph Green function in
\cref{sec:graph_green}.

\subsection{Graph setup}
\label{sec:setup}
Let $\cG = (\cV, \cE)$ be a finite connected graph with nodes (or
vertices) $\cV$ and edges
$\cE \subset \cV \times \cV$. The graphs we work with are non-oriented and have
no self edges, so that edge $\{x,y\} \in E$ is identical to edge $\{y,x\}$ and
there are no edges of the form $\{x,x\}$. We partition the nodes into four
non-intersecting and non-empty sets $B$, $\Omega^+$,  $\partial \Omega$ and
$\Omega^-$. We further assume that out of these four node subsets, the only
node subset pairs that are connected are $\{B,\Omega^+\}$,
$\{\!\{\Omega^+,\partial\Omega\}\!\}$ and
$\{\!\{\partial\Omega,\Omega^-\}\!\}$. We say that a
node subset pair $\{\!\{X,Y\}\!\}$, where  $X,Y \subset \cV$ is connected if there
are $x\in X$ and $y\in Y$ such that $\{x,y\} \in \cE$. As in the continuum, we
call $\Omega^+$ the exterior, $\partial\Omega$ the boundary and $\Omega^-$ the
interior. We shall see later in \cref{sec:graph_green} that the set $B$ is needed to
guarantee a unique definition for the graph Green function.
We give an example graph satisfying these connectivity restrictions
in \cref{fig:setup}. 

\begin{remark}
\label{rem:thick:boundary}
In continuum potential theory, the boundary $\partial\Omega$ is defined
topologically as $\overline{\Omega^-}\setminus \Omega^-$. For graphs, if we are given $\Omega^-
\subset \cV$, we may define its boundary similarly by finding all the nodes
that are adjacent to a node in $\Omega^-$ but not in $\Omega^-$. This is an
example of a ``thin'' boundary and it can satisfy the connectivity requirements
we impose. However we also allow quite arbitrary ``thick'' boundaries.
\end{remark}

\begin{figure}[h]
    \centering
    \includegraphics[width=0.5\textwidth]{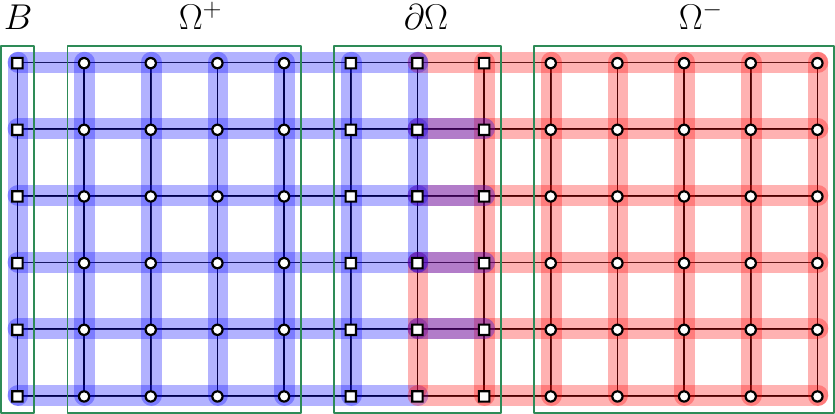}
    \caption{An example of a graph where the only node subset pairs
    that are connected are $\{\!\{B,\Omega^+\}\!\}$,
    $\{\!\{\Omega^+,\partial\Omega\}\!\}$ and
    $\{\!\{\partial\Omega,\Omega^-\}\!\}$. The colors of the edges
    represent a possible edge partition of the unity. Blue represents
    $p^+ = 1$, red is $p^-=1$ and violet a mixture of the two with
    $p^+ + p^- = 1$.  }
    \label{fig:setup}
\end{figure}

\subsection{Graph Laplacian}
\label{sec:gl}
Let us fix a priori the ordering of the nodes $\cV$ and edges $\cE$.
For a set $X$ we write its cardinality with $|X|$.  The \emph{discrete
gradient} $\nabla$ is a $|\cE|\times |\cV|$ matrix such that for any
nodal quantity $u \in \real^{|\cV|}$, $(\nabla u)_k = u_i - u_j$ where
$i$ and $j$ are the node indices associated with the $k-$th edge,
given in an order that is known a priori. The ordering of the nodes
on each edge that is used in the definition
of the discrete gradient is irrelevant, as long as it
remains fixed. We sometimes refer to and use nodal or edge based quantities as
``functions''. A \emph{conductivity} is a positive weight assigned to each
edge, or in the predetermined ordering of $\cE$, some $\sigma \in
(0,\infty)^{|\cE|}$. The \emph{weighted graph Laplacian} associated with
a conductivity $\sigma \in (0,\infty)^{|\cE|}$ is the $|\cV|  \times
|\cV|$ matrix defined by $L(\sigma) = \nabla^T \diag(\sigma) \nabla$,
where $\diag(\sigma)$ is the diagonal matrix with $\sigma$ in its
diagonal. A node based quantity $u \in
\real^{|\cV|}$ is said to be \emph{$\sigma-$harmonic} on some $X \subset
\cV$ if $(L(\sigma) u)_X = 0$. We say that $u \in \real^{|\cV|}$ solves
the \emph{Dirichlet problem} with Dirichlet boundary condition $u|_B =
f$ if 
\begin{equation}
 \begin{aligned}
 (L(\sigma) u)|_{\vint} &= 0,\\
 u|_B &= f,
 \end{aligned}
 \label{eq:dir}
\end{equation} 
where $\vint \equiv \cV \setminus B$. In other words, $u$ is $\sigma-$harmonic
on $\vint$ with prescribed boundary data $f$ on $B$. A physical interpretation of
\eqref{eq:dir} is that the weighted graph represents a resistor network, where
the $k-$th edge is a resistor with resistance $\sigma_k^{-1}$ (the conductance
is the reciprocal of resistance) and the resistors are connected at the
nodes. Then for $i \in \vint$, $u_i$ is the voltage of the $i-$th node 
resulting from imposing voltages $f_j$ at all nodes $j \in B$. A well
established result is that if $\cG$ is connected, then the Dirichlet problem admits
a unique solution for any boundary voltage $f$, see e.g. \cite{Chung:1997:SGT}. 
  
\subsection{Graph Green function}\label{sec:graph_green}
A basic notion in potential theory is the Green function.
On a weighted graph, we may represent a Green function by a $|\cV|
\times |\cV|$ matrix $G$ that can be defined by blocks as 
 \begin{equation}
     \label{eqn:greendef}
     G = \begin{bmatrix} G[B,B] & G[B,\vint]\\
     G[\vint,B] & G[\vint,\vint]
     \end{bmatrix}
     =\begin{bmatrix}
         0 & 0\\
         0 & L[\vint,\vint]^{-1}
     \end{bmatrix},
 \end{equation}
where for clarity we have omitted the dependency of the graph Laplacian $L$ on
$\sigma$. Note that the inverse in \eqref{eqn:greendef} exists because $\cG$ is
a connected graph. If $G_j$ is the column of
$G$ associated with node $j$, then either $j \in B$ so that $G_j = 0$; or $j
\in \vint$, and $G_j$ solves the Dirichlet problem
\begin{equation}
	\begin{aligned}
		(L(\sigma)G_j)|_{\vint} &= e_j,\\
	    G_j|_{B} & = 0,
	\end{aligned}
\end{equation}
where $e_j$ is the $j-$th canonical basis vector of $\real^{|\cV|}$.
Thus a physical interpretation for $G_j$ is the voltage in a resistor
network resulting from injecting a unit current at the $j-$th node, $j
\in \vint$, while imposing zero voltages at the boundary $B$.

Alternatively, we may write $G$ using restriction matrices. For some $X\subset
\cV$, we consider the $|X| \times |\cV|$ matrix $R_X$ that restricts a vector
defined on all the nodes to $X$, i.e.  $R_Xu = u|_X$. The Green function can
then be written as 
\begin{equation}
    G = R_{\vint}^T L[\vint,\vint]^{-1} R_{\vint}.
\end{equation}
The Moore-Penrose pseudoinverse of $G$ (see e.g. \cite{Golub:2013:MC})
is given by 
\begin{equation}
	G^\dagger = R_{\vint}^T L[\vint,\vint] R_{\vint},
\end{equation}
and has the following property
\begin{equation}
        G^\dagger G = G G^\dagger = R_{\vint}^T R_{\vint}.
\end{equation}
Since the Laplacian is symmetric, the Green function is also symmetric.
This can be thought of as a reciprocity property, i.e. the voltage
measured at the $i-$th node resulting from a current source at the
$j-$th node is the same as the voltage at the $i-$th node resulting
from a current source at the $i-$th node.  As in the continuum (see
e.g.  \cite{Evans:2010:PDE}), the Green function can be used to solve
Dirichlet problems with non-zero source terms. For example if $\phi \in
\real^{\cV}$ satisfies $\phi|_B = 0$, then by construction $u = G \phi$
solves the Dirichlet problem with source term $\phi$
\begin{equation}
\label{eq:greenrw}
    \begin{aligned}
        (L(\sigma) u)|_{\vint} &= \phi,\\
        u|_B &= 0.
    \end{aligned}
\end{equation}

\begin{remark}
\label{rem:B}
In the continuum, no set $B$ is needed, but $\Omega^+$ is infinite and this
Dirichlet boundary condition is generally replaced by a radiation boundary
condition at infinity. We believe the set $B$ will not be needed when we expand this
formalism to lattices, but that is the subject of current research.
\end{remark}

\section{A discrete potential theory}
\label{sec:dpt}
Our discrete potential theory is based on a decomposition of the
Laplacian into an exterior and an interior Laplacian
(\cref{sec:decomp}). This allows us to define in \cref{sec:trace}
discrete counterparts to the Dirichlet and Neumann trace operators. The
discrete single and double layer potential operators are introduced in
\cref{sec:slp-dlp}. The single and double layer potential operators
together with appropriate traces allow us to reproduce $\sigma-$harmonic
functions in different sets of nodes, using the reproduction formulas
in \cref{sec:repro}. Taking boundary traces of the single and double
layer potential operators yields discrete analogues of continuum
boundary layer operators and jump formulas (\cref{sec:blo}). Finally in
\cref{sec:calderon} we show that the boundary layer operators we define
are tied together by Calder\'on projectors. In other words taking the
interior (resp. exterior) Cauchy data of a function $\sigma-$harmonic in the
interior (resp. exterior) is enough to reproduce this function in the
interior (resp. exterior).

\subsection{A decomposition of the Laplacian}
\label{sec:decomp}
Our construction relies on decomposing the Laplacian into two graph Laplacians,
one associated with the interior and another with the exterior. To define the
possible decompositions, let us first partition the edges into the edges
$\cE^+$ having at least one node in $B \cup \Omega^+$, the edges $\cE^-$
having at least one node in $\Omega^-$ and $\cE^{\partial\Omega}$, the edges
having two nodes in $\partial\Omega$. We then define an \emph{edge partition
of unity} of the form $p^+, p^- \in [0,1]^{|\cE|}$, with $p^+ + p^- = 1$,
$p^+|_{\cE^+} = 1$ and $p^-|_{\cE^-}=1$. An example is given in \cref{fig:setup}.
We are left with considerable freedom on the choice of $p^\pm$ on
$\cE^{\partial\Omega}$. All boundary operator definitions involving fluxes (or
currents) depend on the choice of $p^\pm$.  Intuitively, the edge partition of
unity assigns a fraction of the flux through each edge connecting two
nodes 
in $\partial\Omega$ to either the fluxes flowing from the interior to the
boundary or from the boundary to the exterior (or the exterior and interior
Neumann trace operators that we define later in \cref{sec:trace}).

The decomposition of the Laplacian is based on the conductivities
$\sigma^\pm = p^\pm \sigma$, where the multiplication is understood
componentwise. By linearity of $L(\sigma)$ we can decompose the Laplacian as follows
\begin{equation}
 \label{eq:decomp}
 L(\sigma) =  L(\sigma^+) + L(\sigma^-),
\end{equation}
where we call $L(\sigma^+)$ the \emph{exterior graph Laplacian} and
$L(\sigma^-)$ the \emph{interior graph Laplacian}. Since the edge
weights $\sigma^\pm$ can be zero, we cannot call them conductivities,
however we can simply delete any edges where they vanish to see that
$L(\sigma^\pm)$ are indeed weighted graph Laplacians. The decomposition
is illustrated in \cref{fig:cartoon}.

\begin{figure}
 \centering
 \includegraphics[width=0.6\textwidth]{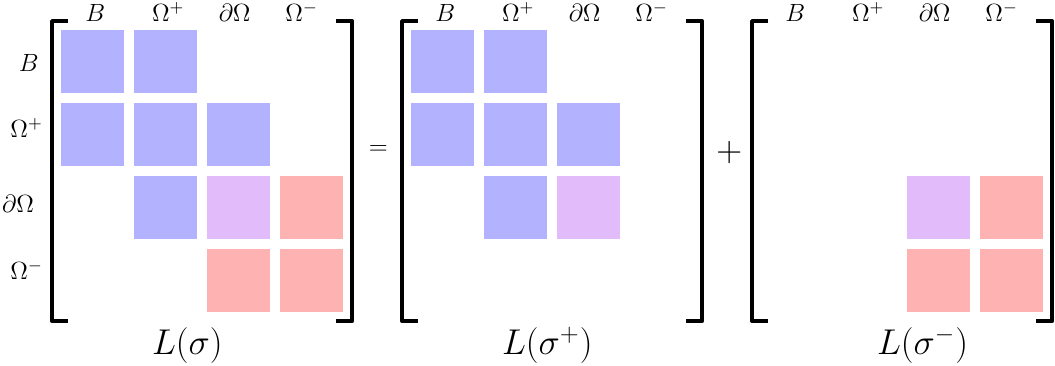}
 \caption{An illustration of the decomposition of the graph Laplacian
 $L(\sigma)$ into exterior $L(\sigma^+)$ and interior $L(\sigma^-)$ Laplacians.
 The blue blocks correspond to blocks that are identical in $L(\sigma)$ and
 $L(\sigma^+)$. The red blocks indicate identical blocks in $L(\sigma)$ and
 $L(\sigma^-)$. The purple $\partial\Omega,\partial\Omega$ block depends on the
 choice of the edge partition of unity $p^\pm$ and is different in $L(\sigma)$,
 $L(\sigma^+)$ and $L(\sigma^-)$.}
 \label{fig:cartoon}
\end{figure}

\subsection{Trace operators}
\label{sec:trace}
The other ingredients we need are discrete analogues of the Dirichlet
and Neumann trace operators from the continuum, see e.g.
\cite{Evans:2010:PDE}. We define the discrete trace operators by
\begin{equation}\label{eqn:trace_operator}
\begin{aligned}
    \gamma_0 &=  R_{\partial\Omega}, &&\text{(Dirichlet trace operator) and}\\
    \gamma_1^{\pm} &= \mp R_{\partial\Omega} L(\sigma^{\pm}). &&\text{(Neumann trace operators)}
\end{aligned}
\end{equation}
In the continuum we distinguish between $\gamma_0^+$, the limit from the
outside and $\gamma_0^-$, the limit from the inside. In a weighted graph,
the Dirichlet trace operator $\gamma_0$ is simply restricting a node
valued quantity  to $\partial \Omega$. It does not make sense to
distinguish $\gamma_0^\pm$ for graphs, as they are both equal. However,
two different Neumann trace operators $\gamma_1^\pm$ are
needed for graphs. The exterior one $\gamma_1^+$ is associated
with the exterior Laplacian and the interior one $\gamma_1^-$ with the
interior Laplacian. To go further, recall that for any $u \in
\real^{|\cV|}$, $L(\sigma) u$ can be understood as the net currents at
all the nodes that are needed to maintain the voltage $u$. Thus the
\emph{interior Neumann trace operator} $\gamma_1^-$ calculates the
currents flowing out of the interior $\Omega^-$ through the boundary
$\partial \Omega$.  Likewise, the \emph{exterior Neumann trace operator}
calculates the currents going from $\partial\Omega$ to $\Omega^+$. The
particular signs used in the Neumann trace operator definitions come from
orienting both fluxes from the interior to the exterior. 

We emphasize that the notion of flux depends on the choice of the
conductivity and partition of unity of the edges.  Intuitively, the
partition of unity $p^+,p^-$ controls the fraction of the currents
internal to $\partial\Omega$ that goes to the interior or exterior
Neumann trace operators.  Notice also that the difference of fluxes
gives, up to a sign, the net currents at the $\partial\Omega$ nodes,
a quantity that is independent of the partition.  Indeed for any $u \in
\real^{|\cV|}$, we have
\begin{equation}
 \label{eq:flux}
 \gamma_1^+u - \gamma_1^-u = - (L(\sigma) u)|_{\partial\Omega}.
\end{equation}
A consequence of \eqref{eq:flux} is that a function $u$ is
$\sigma-$harmonic on $\partial\Omega$ if and only if $\gamma_1^+ u =
\gamma_1^- u$. In other words, there are no source or sinks of currents on
$\partial\Omega$ so the currents flowing out of and into the boundary
are identical.

\subsection{Layer potential operators}
\label{sec:slp-dlp}
We are now ready to define single and double-layer potentials which are
analogues of the continuum ones (see e.g. \cite{Colton:2013:IEM}), as
they define quantities in the whole space ($\cV$ for graphs) from quantities defined on the boundary
$\partial\Omega$.
\begin{definition}\label{def:layer_pots}
The graph layer potential operators are the $|\cV| \times
|\partial\Omega|$ matrices given by
\begin{align}
  \slp &= G\gamma_0^T, & \text{(single layer potential)}\label{eq:slp}\\
  \dlp^\pm &= G[\gamma_1^\pm]^T. & \text{(exterior/interior double layer
  potential)}\label{eq:dlp}
\end{align}
Here $\dlp^+$ is the double layer potential associated with the exterior $\Omega^+$
and similarly $\dlp^-$ is associated with the interior $\Omega^-$. 
\end{definition}
The analogy between the operators of \cref{def:layer_pots} and the
continuum operators can be taken further. Indeed if $\psi \in
\real^{\partial\Omega}$, then $\slp \psi$ is a linear combination of
``monopoles'' (the columns of $\slp$) weighted by the entries of $\psi$.
Physically this corresponds to injecting currents at the nodes in
$\partial\Omega$. Similarly, $\dlp^\pm \psi$ is a linear combination of
``dipoles'' (the columns of $\dlp^{\pm}$) weighted by $\psi$. We
emphasize that the choice of $\dlp^+$ or $\dlp^-$ impacts which set of
nodes is used to inject currents. These sets can be found by looking at
the non-zero rows of $\gamma_1^+$ and $\gamma_1^-$ and are clearly
different: for $\dlp^{\pm}$ we need to inject
currents at the nodes $\cN(\partial \Omega)
\setminus \Omega^\mp$, where $\cN(X)$ is the closed neighborhood of $X
\subset \cV$, i.e. $X$ and all the nodes adjacent to $X$.  

As in the continuum, the layer potential operators $\slp$ and $\dlp^\pm$
take a boundary density and give a $\sigma-$harmonic functions outside
of $\partial\Omega$. Indeed for any $\phi \in \real^{|\partial\Omega|}$
direct calculations reveal that 
\begin{equation}
  (L(\sigma) \slp \phi)|_{\vint} =  (\gamma_0^T \phi)|_{\vint}
  ~\text{and}~
  (L(\sigma) \dlp^\pm \phi)|_{\vint} = ((\gamma_1^\pm)^T \phi)|_{\vint}.
\end{equation}
In particular, this implies that $(L(\sigma) \slp
\phi)|_{\Omega^+\cup\Omega^-} =  0$ and $(L(\sigma) \dlp^\pm
\phi)|_{\Omega^+\cup\Omega^-} = 0$. However these functions are not
$\sigma-$harmonic on $\partial\Omega$.

\subsection{Reproduction formulas}
\label{sec:repro}
We can now introduce reproduction formulas similar to the ones in the
continuum, with one caveat being that we have four formulas rather than
the usual two, depending on the double layer potential we use (either
exterior $\dlp^+$ or interior $\dlp^-$) or depending on whether we want
to reproduce a function that is $\sigma-$harmonic (i.e. with zero net
currents) inside $\Omega^+$ or $\Omega^-$.
\begin{theorem}\label{thm:intrep}
The following reproduction formulas hold:
    \begin{align}
    \label{eq:intrep+}
    \slp(\gamma_1^{+}u)-\dlp^{+}(\gamma_0 u) &= u \ifun_{\partial\Omega \cup \Omega^-},
    &&\text{for}~u~\text{s.t.}~Lu|_{\Omega^- \cup \partial \Omega} = 0,\\
    \label{eq:intrep-}
    \slp(\gamma_1^{-}u)-\dlp^{-}(\gamma_0 u) &= u \ifun_{\Omega^-},
    &&\text{for}~u~\text{s.t.}~Lu|_{\Omega^-} = 0,\\
    \label{eq:extrep+}
    -\slp(\gamma_1^{+}u)+\dlp^{+}(\gamma_0 u) &= u \ifun_{B \cup \Omega^+},
    &&\text{for}~u~\text{s.t.}~Lu|_{\Omega^+} = 0,~u|_B=0,\\
    -\slp(\gamma_1^{-}u)+\dlp^{-}(\gamma_0 u) &= u \ifun_{B \cup \Omega^+ \cup \partial\Omega},
    \label{eq:extrep-}
    &&\text{for}~u~\text{s.t.}~Lu|_{\Omega^+ \cup \partial\Omega} = 0,~u|_B=0.
    \end{align}
    Here we used $\ifun_X \in \real^{|\cV|}$ for the indicator function of a
    set $X \subset \cV$ and multiplication is understood componentwise.
    The dependency of $L$ on $\sigma$ is omitted for clarity.
\end{theorem}
\begin{proof}
    To shorten notation, we use $L^\pm \equiv L(\sigma^\pm)$. Using the
    commutator of two square matrices $\comm{D,C}=DC-CD$ we get
    \[
      \slp\gamma_1^\pm  - \dlp^\pm\gamma_0  = 
      \mp GR_{\partial\Omega}^TR_{\partial\Omega}L^{\pm} \pm GL^{\pm}R_{\partial\Omega}^TR_{\partial\Omega} 
      = \pm G \comm{ L^\pm,  R_{\partial\Omega}^TR_{\partial\Omega}}.
    \]
    We start by proving \eqref{eq:intrep+}. Since
    $(u\ifun_{\partial\Omega\cup\Omega^-})|_B =
    0$, it is enough to verify that 
    \begin{equation}
        \comm{ L^+, R_{\partial\Omega}^TR_{\partial\Omega} } u =
	G^\dagger (u\ifun_{\partial\Omega\cup\Omega^-}).
    \end{equation}
    An explicit calculation gives 
    \begin{equation}
     \comm{L^+, R_{\partial\Omega}^TR_{\partial\Omega}} u =
           \begin{bmatrix}  
           0 & 0 & 0 & 0 \\
           0 & 0 & L[\Omega^+,\partial\Omega] & 0\\
           0& -L[\partial\Omega,\Omega^+] & 0 & 0\\
           0 & 0 & 0 & 0
     \end{bmatrix}u
     = \begin{bmatrix}
         0\\
         L[\Omega^+,\partial\Omega]u|_{\partial\Omega}\\
         -L[\partial\Omega,\Omega^+]u|_{\Omega^+}\\
         0
     \end{bmatrix},
    \end{equation}
    where we used the node ordering $B \cup \Omega^+ \cup \partial \Omega \cup
    \Omega^-$. Likewise we get
    \begin{equation}
    G^\dagger (u\ifun_{\partial\Omega\cup\Omega^-}) =
    \begin{bmatrix}
    0\\
    L[\Omega^+,\partial\Omega]u|_{\partial\Omega}\\
    L[\partial\Omega,\partial\Omega] u|_{\partial\Omega} +
    L[\partial\Omega,\Omega^-] u|_{\Omega^-}\\
    L[\Omega^-,\partial\Omega]u|_{\partial\Omega} + L[\Omega^-,\Omega^-]
    u|_{\Omega^-}
    \end{bmatrix}
    =  
    \begin{bmatrix}
         0\\
         L[\Omega^+,\partial\Omega]u|_{\partial\Omega}\\
         -L[\partial\Omega,\Omega^+]u|_{\Omega^+}\\
         0
    \end{bmatrix},
    \end{equation}
    where the last equality comes from the assumption that $(L u)
    |_{\partial\Omega\cup \Omega^-} = 0$. This proves the reproduction 
    formula \eqref{eq:intrep+}.

    We now prove \eqref{eq:intrep-}. Similarly, it is sufficient to
    verify that
    \begin{equation}
        \comm{ L^-, R_{\partial\Omega}^TR_{\partial\Omega} } u =
	-G^\dagger (u\ifun_{\Omega^-}).
    \end{equation}
    Direct calculations together with the assumption $L u|_{\Omega^-} = 0$ reveal that indeed
    \begin{equation}
        \comm{ L^-,  R_{\partial\Omega}^TR_{\partial\Omega} } u
        = \begin{bmatrix}
            0\\0\\
            -L[\partial\Omega,\Omega^-]u|_{\Omega^-}\\
            L[\Omega,\partial\Omega^-]u|_{\partial\Omega}
        \end{bmatrix}
        = -G^\dagger (u\ifun_{\Omega^-}).
    \end{equation}
    We can proceed similarly to verify the remaining reproduction
    formulas. For \eqref{eq:extrep+}, since we have $Lu|_{\Omega^+}=0$
    and $u|_B=0$, direct calculations give $ \comm{ L^+,
    R_{\partial\Omega}^TR_{\partial\Omega} } u = - G^\dagger
    (u\ifun_{\Omega^+})$. For \eqref{eq:extrep-}, we have 
    $Lu|_{\Omega^+\cup \partial\Omega}=0$ and $u|_B=0$, which leads to $\comm{
    L^-, R_{\partial\Omega}^TR_{\partial\Omega} } u =  G^\dagger
    (u\ifun_{\Omega^+\cup\partial\Omega})$.
\end{proof}

\begin{remark}
Recall that $u$ is $\sigma-$harmonic on $\partial\Omega$ if and only if $\gamma_1^+ u =
\gamma_1^-u$. Therefore the reproduction formulas \eqref{eq:intrep+}
and \eqref{eq:extrep-}, which both assume  $u$ is $\sigma-$harmonic on
$\partial\Omega$, are valid for both Neumann traces $\gamma_1^\pm u$.
\end{remark}

\subsection{Boundary layer operators and jump formulas}
\label{sec:blo}
The boundary layer operators are defined by taking Dirichlet or Neumann
traces of the layer potential operators, which are defined as follows.
\begin{definition}\label{def:blo}
    The boundary layer operators are the following 
    $|\partial\Omega| \times |\partial\Omega|$ matrices:
    \begin{enumerate}[i.]
     \setlength{\itemsep}{0pt}
     \item Single layer operator: $\slo = \gamma_0 \slp$.
     \item Double layer operator: $\dlo = \frac{1}{2} \gamma_0 (\dlp^+ + \dlp^-)$.
     \item Adjoint Double layer operator: $\dloa = \frac{1}{2} (\gamma_1^+ + \gamma_1^-) \slp = \dlo^T$.
     \item Hypersingular operator: $\hso = -\gamma_1^+ \dlp^- = -\gamma_1^- \dlp^+$.
    \end{enumerate}
\end{definition}

The equality in \cref{def:blo}(iv) is not obvious and is proved in
\cref{thm:jump}, as part of other jump relations across the boundary
$\partial\Omega$ that are satisfied by the boundary layer potentials. As
in the continuum the boundary layer operators appear when studying the
traces of single and double layer potentials.
\begin{theorem}\label{thm:jump}
Let $I$ be the identity matrix. The following jump relations hold
\begin{align}
    \label{eq:slpjump}
    \gamma_1^\pm \slp &= \mp \frac{I}{2} + D',\\
    \label{eq:dlpjump}
    \gamma_0\dlp^\pm &= \mp \frac{I}{2} + D,~\text{and}\\
    \label{eq:dlpcont}
    \gamma_1^+\dlp^- &= \gamma_1^-\dlp^+.
\end{align}
\end{theorem}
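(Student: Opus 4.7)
My plan is to verify all three jump relations by direct algebraic manipulation, leveraging the symmetry of $L$, $L_\alpha^\pm$, and $G$, together with the decomposition $L = L_\alpha^+ + L_\alpha^-$. The whole argument hinges on one key identity that I will establish first, namely
\[
R_{\partial\Omega}\, L G\, R_{\partial\Omega}^T = I_{\partial\Omega}.
\]
The proof of this identity is a short computation: since $\partial\Omega \subset \vint$, the restriction factors as $R_{\partial\Omega} = R_{\partial\Omega,\vint} R_\vint$ for a secondary restriction $R_{\partial\Omega,\vint} : \real^\vint \to \real^{\partial\Omega}$, and then $R_\vint L R_\vint^T = L[\vint,\vint]$ combined with the definition of $G$ makes the expression telescope to $R_{\partial\Omega,\vint} R_{\partial\Omega,\vint}^T = I_{\partial\Omega}$. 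By symmetry of $G$ and $L$, the transposed identity $R_{\partial\Omega} G L R_{\partial\Omega}^T = I_{\partial\Omega}$ follows for free.

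For \eqref{eq:slpjump}, the definition $\dloa = \tfrac{1}{2}(\gamma_1^+ + \gamma_1^-)\slp$ immediately gives $\gamma_1^+ \slp + \gamma_1^- \slp = 2\dloa$; for the difference, the telescoping calculation $\gamma_1^+ \slp - \gamma_1^- \slp = -R_{\partial\Omega}(L_\alpha^+ + L_\alpha^-) G R_{\partial\Omega}^T = -R_{\partial\Omega} L G R_{\partial\Omega}^T = -I$ uses the key identity, and adding/subtracting the two relations yields $\gamma_1^\pm \slp = \mp I/2 + \dloa$. I treat \eqref{eq:dlpjump} identically, first rewriting $\dlp^\pm = \mp G L_\alpha^\pm R_{\partial\Omega}^T$ by symmetry of $L_\alpha^\pm$, then combining $\gamma_0\dlp^+ + \gamma_0\dlp^- = 2\dlo$ (by definition) with the transposed identity $R_{\partial\Omega} G L R_{\partial\Omega}^T = I_{\partial\Omega}$.

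The main obstacle is \eqref{eq:dlpcont}. Expanding the definitions, the desired equality is equivalent to
\[
R_{\partial\Omega}\bigl(L_\alpha^+ G L_\alpha^- - L_\alpha^- G L_\alpha^+\bigr) R_{\partial\Omega}^T = 0.
\]
My trick is to substitute $L_\alpha^+ = L - L_\alpha^-$ in both products so that the $L_\alpha^- G L_\alpha^-$ terms cancel, reducing the task to showing $R_{\partial\Omega}(L G L_\alpha^- - L_\alpha^- G L) R_{\partial\Omega}^T = 0$. The decisive structural fact is that, given the block form of $G$ relative to $\mathcal{V} = B \cup \vint$, direct multiplication shows that $LG$ acts as the identity on its $\vint$-rows: $(L G M)[\vint, :] = M[\vint, :]$ for any matrix $M$. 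Applying this with $M = L_\alpha^-$ and further restricting to the $\partial\Omega \times \partial\Omega$ block (legal since $\partial\Omega \subset \vint$), I get $R_{\partial\Omega}(L G L_\alpha^-) R_{\partial\Omega}^T = L_\alpha^-[\partial\Omega, \partial\Omega]$. By transposition and the symmetry of $L_\alpha^-$, the other term $R_{\partial\Omega}(L_\alpha^- G L) R_{\partial\Omega}^T$ equals the same block, so the two blocks cancel and \eqref{eq:dlpcont} follows, justifying the single definition $\hso = \gamma_1^+ \dlp^- = \gamma_1^- \dlp^+$ of the hypersingular operator.
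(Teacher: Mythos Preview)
Your proof is correct and follows essentially the same approach as the paper: both arguments rest on the block structure of $LG$ and $GL$ (which gives $R_{\partial\Omega} LG R_{\partial\Omega}^T = R_{\partial\Omega} GL R_{\partial\Omega}^T = I_{\partial\Omega}$) together with the decomposition $L = L_\alpha^+ + L_\alpha^-$. The only cosmetic difference is that for \eqref{eq:dlpcont} the paper substitutes $L_\alpha^+ = L - L_\alpha^-$ and $L_\alpha^- = L - L_\alpha^+$ simultaneously, reducing to the vanishing of $R_{\partial\Omega}(L_\alpha^+ GL - LG L_\alpha^+)R_{\partial\Omega}^T$, whereas you substitute only $L_\alpha^+ = L - L_\alpha^-$ and cancel the symmetric $L_\alpha^- G L_\alpha^-$ term to reach the analogous expression with $L_\alpha^-$; both vanish for the same reason you give.
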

\begin{proof}
    First notice that in the node ordering $B \cup \vint$ we have
    \[
    LG = \begin{bmatrix}
        0 & L[B,\vint]L[\vint,\vint]^{-1} \\ 0 & I
    \end{bmatrix}
    ~\text{and}~
    GL = \begin{bmatrix}
        0 & 0 \\ L[\vint,\vint]^{-1} L[\vint,B] & I
    \end{bmatrix}.
    \]
    To prove \eqref{eq:slpjump}, we can see that
    \[
    \begin{aligned}
    \gamma_1^\pm \slp &= \mp R_{\partial\Omega} L^\pm G R_{\partial\Omega}^T = \mp R_{\partial\Omega} \frac{L^+ +L^-}{2} G R_{\partial\Omega}^T - R_{\partial\Omega} \frac{L^+ - L^-}{2} G R_{\partial\Omega}^T\\
    &= \mp \frac{1}{2} R_{\partial\Omega} L G R_{\partial\Omega}^T  + D'
     = \mp \frac{I}{2}  + D'.
    \end{aligned}
    \]
    Similarly, to prove \eqref{eq:dlpjump}, we notice that
    \[
    \begin{aligned}
    \gamma_0 \dlp^\pm &= \mp R_{\partial\Omega} G L^\pm R_{\partial\Omega}^T = \mp R_{\partial\Omega} G \frac{L^+ +L^-}{2}  R_{\partial\Omega}^T - R_{\partial\Omega} G \frac{L^+ - L^-}{2}  R_{\partial\Omega}^T\\
    &= \mp \frac{1}{2} R_{\partial\Omega}  G L R_{\partial\Omega}^T  + D'
     = \mp \frac{I}{2}  + D.
    \end{aligned}
    \]
    Finally we get \eqref{eq:dlpcont} from
    \[
    \begin{aligned}
    \gamma_1^+\dlp^- &= R_{\partial\Omega} L^+ G L^- R_{\partial\Omega}^T\\
    &= R_{\partial\Omega} (L-L^-) G (L-L^+) R_{\partial\Omega}^T\\
    &= R_{\partial\Omega} L^- G L^+ R_{\partial\Omega}^T 
    + R_{\partial\Omega} (L^+ GL - LG L^+ )R_{\partial\Omega}^T\\
    &=\gamma_1^-\dlp^+.
    \end{aligned}
    \]
\end{proof}
\begin{remark}
The jump relations we obtain in \cref{thm:jump} are identical to those
in the continuum \eqref{eq:jump:cont}, with two notable differences.
First the continuum jump relation for the values of the double layer
potential across the boundary is $\gamma_1^\pm \slp = \mp \frac{I}{2} +
D'$, so there is a minus sign difference in the identity term compared
to the discrete identity \eqref{eq:dlpjump}. Second the hypersingular
operator is in the continuum $H = \gamma_1^+ \dlp = \gamma_1^- \dlp$. In
weighted graphs, we use two different double layer potential
operators, one for the exterior and one for the interior, see
\eqref{eq:dlpcont}.
\end{remark}

\subsection{Calder\'on projectors}
\label{sec:calderon}
We now present a discrete analogue of the interior and exterior
Calder\'on projectors. The derivation is inspired from that of the
continuum, see e.g. \cite{Steinbach:2008:NAM}, and proceeds by taking
the Dirichlet and Neumann traces of one of the reproduction formulas in
\cref{thm:intrep} and finding a system of boundary integral equations
that is satisfied by the Cauchy data, i.e. the Dirichlet and Neumann
data together. In weighted graphs, only the reproduction formulas
\eqref{eq:intrep+} and \eqref{eq:extrep-} that include $\partial\Omega$
can be used.
\begin{theorem}\label{thm:calderon}
Define the following matrix using the boundary operators of \cref{def:blo}
\begin{equation}
 C = \begin{bmatrix}
     -\dlo & \slo \\
     \hso & \dloa
 \end{bmatrix}.
\end{equation}
Then the matrices $P^\pm = \frac{I}{2} \mp C$ are projection matrices,
i.e. $(P^\pm)^2 = P^\pm$.  As in the continuum, we call $P^+$ the
\emph{exterior discrete Calder\'on projector} and $P^-$ the
\emph{interior discrete Calder\'on projector}.
\end{theorem}
\begin{proof}
We start by proving that $P^-$ is a projector. Consider two arbitrary
$\psi,\phi \in \real^{|\partial\Omega|}$. The function $u = \slp \psi -
\dlp^+ \phi$ is $\sigma-$harmonic on $\Omega^-$ and we can calculate its
Dirichlet and interior Neumann traces using the jump relations
\eqref{eq:slpjump},
\eqref{eq:dlpjump} and 
\cref{def:blo}  we obtain
\[
 \begin{aligned}
  \gamma_0 u &= \gamma_0 \slp \psi - \gamma_1^- \dlp^+ \phi 
  = \slo \psi + (\frac{I}{2} - D )\phi, \\
  \gamma_1^- u &=  \gamma_0 \slp \psi - \gamma_1^- \dlp^+ \phi
  = (\frac{I}{2} + \dloa)\psi + \hso \phi,
 \end{aligned}
\]
which in system form is:
\begin{equation}
  \begin{bmatrix}
     \gamma_0 u\\ \gamma_1^- u
 \end{bmatrix}
 =
 \begin{bmatrix} \frac{I}{2}-D & S \\
 H & \frac{I}{2} + D'
 \end{bmatrix}
 \begin{bmatrix}
     \phi \\ \psi
 \end{bmatrix}
 =
 P^- 
  \begin{bmatrix}
     \phi \\ \psi
 \end{bmatrix}.
\end{equation}

To proceed, we would like to apply the interior reproduction formula
\eqref{eq:intrep+} directly to $u$, but we encounter an issue: $u$ is
$\sigma-$harmonic on $\Omega^-$ but not on $\partial\Omega$. Indeed both single
and double layer potentials are fields generated by sources supported on
a neighborhood of $\partial\Omega$. However, if there exists a
$\sigma-$harmonic function $\widetilde{u}$ with
$\widetilde{u}|_{\partial\Omega \cup \Omega^-} = u|_{\partial\Omega \cup
\Omega^-}$, then necessarily $\gamma_1^+ \widetilde{u} = \gamma_1^-
\widetilde{u}$. But by construction we also have $\gamma_1^-
\widetilde{u} = \gamma_1^- u$, hence $\gamma_1^+
\widetilde{u} = \gamma_1^- u$. Intuitively, defining $\gamma_1^+
\widetilde{u} = \gamma_1^- u$ corresponds to injecting currents at the
nodes in $\partial\Omega$ and adjacent nodes in $\Omega^+$ so as
to ensure that there are no sources or sinks of currents in
$\partial\Omega$. Crucially, this does not necessitate defining
$\widetilde{u}$ outside of $\partial\Omega \cup \Omega^-$, only that the
fluxes $\gamma_1^\pm \widetilde{u}$ are well defined and equal to
$\gamma_1^- u$. Taking
the Dirichlet and interior Neumann traces of \eqref{eq:intrep+} applied
to $\widetilde{u}$, using the appropriate jump relations and putting in
system form we get:
\begin{equation}
  \begin{bmatrix}
     \gamma_0 \widetilde{u}\\ \gamma_1^- \widetilde{u}
 \end{bmatrix}
 =
 \begin{bmatrix} \frac{I}{2}-D & S \\
 H & \frac{I}{2} + D'
 \end{bmatrix}
 \begin{bmatrix}
     \gamma_0 \widetilde{u}\\ \gamma_1^+ \widetilde{u}
 \end{bmatrix}.
\end{equation}
As discussed earlier we have $\gamma_0 \widetilde{u} = \gamma_0 u$ and
$\gamma_1^\pm \widetilde{u} = \gamma_1^- u$, hence
\begin{equation}
  \begin{bmatrix}
     \gamma_0 u\\ \gamma_1^- u
 \end{bmatrix}
 =
 \begin{bmatrix} \frac{I}{2}-D & S \\
 H & \frac{I}{2} + D'
 \end{bmatrix}
 \begin{bmatrix}
     \gamma_0 u\\ \gamma_1^- u
 \end{bmatrix}
 =
 (P^-)^2
 \begin{bmatrix} \phi\\\psi \end{bmatrix}.
\end{equation}
Since $\phi,\psi \in \real^{|\partial\Omega|}$ are arbitrary, we have
established that $(P^-)^2 = P^-$, and therefore $P^-$ is a projector.  

That $P^+$ is a projector can be obtained in a similar manner by taking
the Dirichlet and exterior Neumann traces of some $v = -\slp \psi +
\dlp^- \phi$, where $\phi,\psi \in \real^{|\partial\Omega|}$ are
arbitrary. Clearly $v$ is $\sigma-$harmonic on $\Omega^+$ and we can
define $\widetilde{v}$ that is identical to $v$ on $B \cup \Omega^+ \cup
\partial \Omega$. If we further impose that $\gamma_1^- \widetilde{v} =
\gamma_1^+ \widetilde{v} = \gamma_1^+ v$,  we see that $\widetilde{v}$
is also $\sigma-$harmonic on $\partial\Omega$. To conclude, we take the
Dirichlet and exterior Neumann traces of the exterior reproduction 
formula \eqref{eq:extrep-}. An alternate (shorter) proof is
to use that $P^- = \frac{I}{2} + C$ is a projector and hence 
\begin{equation}
 (P^-)^2(\frac{I}{2}+C)^2 = \frac{I}{4} + C + C^2 = \frac{I}{2} + C =
 P^-.
\end{equation}
In other words, $C$ must satisfy $C^2=\frac{I}{4}$. Then clearly we have
that: 
\begin{equation}
 (P^+)^2 = (\frac{I}{2}-C)^2 = \frac{I}{4} - C + C^2 = \frac{I}{2} - C =
 P^+.
\end{equation}
\end{proof}

A consequence of the Calder\'on projector property is that a
$\sigma-$harmonic $u$ on $\Omega^-$ is uniquely determined by its Cauchy
data $\gamma_0 u$, $\gamma_1^- u$. Similary, a $\sigma-$harmonic $v$ on
$\Omega^+$ with $v|_B = 0$ is uniquely determined by its Cauchy data
$\gamma_0 v$, $\gamma_1^+ v$.
The following result follows by Considering $C^2=I/4$ block by block.
\begin{corollary}
The boundary integral operators satisfy the following.
\begin{align}
 \slo \hso &= -\dlo^2 + \frac{I}{4},\label{eq:SH}\\
 \hso \slo &= -(\dloa)^2 + \frac{I}{4},\label{eq:HS}\\
 \slo \dloa &= \dlo \slo,\label{eq:plemelj}\\
 \dloa \hso &= \hso \dlo.\label{eq:DH}
\end{align}
\end{corollary}
These identities are similar to those in the continuum, see e.g.
\cite[Corollary 6.19]{Steinbach:2008:NAM}. The relation \eqref{eq:plemelj} can be used
to symmetrize the double layer potential $\dlo$ by using an inner
product defined using the single layer potential $\slo$ and is called 
Plemelj symmetrization principle. This is discussed in more detail in
\cref{sec:npo}.

\section{Applications}
\label{sec:appl}
We take the exterior and interior Dirichlet problem in \cref{sec:dir}
and reformulate them as a boundary integral equation, i.e. as a $|\partial\Omega|
\times |\partial\Omega|$ linear system. The Dirichlet to
Neumann map is presented in \cref{sec:dtn}, and it can be represented as
in the continuum in terms of the boundary linear operators. Moreover, we
show that our definition of Dirichlet to Neumann map corresponds to that
in a graph with boundary. Then in \cref{sec:npo} we prove that some
results on the spectrum of the Neumann-Poincar\'e operator that are well
known in the continuum also hold in weighted graphs. Finally, the
exterior and interior Neumann problems are transformed into a boundary
integral equation in \cref{sec:neu}.

\subsection{Dirichlet problem}
\label{sec:dir}
We start with the following result.
\begin{lemma}
 \label{lem:slo}
 The single layer boundary operator $\slo$ is an invertible symmetric positive definite
 matrix with non-negative entries.
\end{lemma}
\begin{proof}
 Notice that $\slo = \gamma_0 G \gamma_0^T =
 (L[\vint,\vint]^{-1})[\partial\Omega,\partial\Omega]$.  Because of the
 connectivity of $\cG$, $L[\vint,\vint]$ is symmetric positive definite.
 Hence $\slo$ is a principal major of a symmetric positive matrix and
 thus must also be symmetric positive definite. For the
 sign property, notice that $L[\vint,\vint]$ is a non-singular
 $M-$matrix so that all the entries of its inverse are non-negative.
\end{proof}
We point out that $\slo$ being symmetric positive definite is analogous
to the ellipticity of its continuum version (see e.g.
\cite[Theorem 6.22]{Steinbach:2008:NAM}).
The \emph{interior Dirichlet problem} consists in
finding $u$ such that
\begin{equation} 
 \begin{aligned}
  (L(\sigma^-) u)|_{\Omega^-} &= 0, \\
  u|_{\partial\Omega} &= f,\\
 \end{aligned}
 \label{eq:dir:int}
\end{equation}
for some $f \in \real^{|\partial\Omega|}$. We emphasize that
\eqref{eq:dir:int} does not restrict in any way $u$ at the $B \cup
\Omega^+$ nodes. If we use the ansatz $u = \slp \phi$, then $f =
\gamma_0 u =\gamma_0 \slp \phi = \slo \phi$. Hence $\phi = \slo^{-1} f$.
The solution to \eqref{eq:dir:int} is unique on $\partial\Omega \cup
\Omega^-$ and can be written as $u = \slp \slo^{-1} f$.  We can proceed in a similar fashion with the
\emph{exterior Dirichlet problem} which is to find $v$ such that
\begin{equation}
 \begin{aligned}
  (L(\sigma^+) v)|_{\Omega^+} &= 0, \\
  v|_{\partial\Omega} &= f,\\
  v|_{B} &=0.
 \end{aligned}
 \label{eq:dir:ext}
\end{equation}
Notice that \eqref{eq:dir:ext} does not restrict in any way the values
of $v$ at $\Omega^-$.  Using the ansatz $v =\slp \phi$, we see that the
solution to \eqref{eq:dir:ext} is unique on $B \cup \Omega^+ \cup
\partial\Omega$ and is given by $v = \slp \slo^{-1} f$.  Thus both the
interior and exterior Dirichlet problems can be reduced to a discrete
boundary integral equation, i.e. to solving a linear system of size
$|\partial\Omega| \times |\partial\Omega|$ instead of a system of a
larger size on either $B \cup \Omega^+ \cup \partial\Omega$ or
$\partial\Omega \cup \Omega ^-$.

\begin{remark}
In both the interior \eqref{eq:dir:int} and exterior \eqref{eq:dir:ext}
Dirichlet problems we may replace $L(\sigma^\pm)$ by $L(\sigma)$. Indeed
for any $u,v \in \real^{|\cV|}$ we have $(L(\sigma^-) u)|_{\Omega^-} =
(L(\sigma) u)|_{\Omega^-}$ and  $(L(\sigma^+) v)|_{\Omega^+} =
(L(\sigma) v)|_{\Omega^+}$. See also \cref{fig:cartoon}.
\end{remark}

\subsection{Dirichlet to Neumann map}
\label{sec:dtn}
We define the interior (resp. exterior) Dirichlet to Neumann map at the
boundary $\partial\Omega$ as the linear map (matrix)
$\Lambda_{\sigma^-}$ (resp. $\Lambda_{\sigma^+}$) that maps the
Dirichlet trace $\gamma_0 u$ to the Neumann trace $\gamma_1^\pm u$ for a
function $u$ that is $\sigma-$harmonic on $\Omega^-$ (resp. on
$\Omega^+$ and with $u|_B = 0$). This is also known as the
Steklov-Poincar\'e or voltages-to-current, since it is the matrix that
maps voltages at the boundary $\partial\Omega$ to the currents at
$\partial\Omega$. The following result shows that there are several ways of
writing this mapping using the boundary layer operators. Such identities
are well known in the continuum (see e.g.  \cite[\S
6.6.3]{Steinbach:2008:NAM}).  
\begin{theorem}
 \label{thm:dtn:id}
 The interior $\Lambda_\sigma^-$ and exterior $\Lambda_\sigma^+$ Dirichlet
 to Neumann maps admit the following representations:
 \begin{equation}
 \begin{aligned}
 \Lambda_\sigma^{\pm} &= (\mp\frac{I}{2} +
 \dloa) \slo^{-1} = \slo^{-1} (\mp \frac{I}{2} + \dlo)\\
 &= \mp H \mp ( \mp \frac{I}{2} + \dloa ) \slo^{-1} ( \mp \frac{I}{2} + \dlo
 ).
 \end{aligned}
 \label{eq:dtn:def}
\end{equation}
\end{theorem}
\begin{proof}
Since the solution to both interior and exterior Dirichlet
problems with boundary condition $\gamma_0 u = f$ is $u = \slp \slo^{-1}
f$, the interior/exterior Dirichlet to Neumann maps can be found by
taking interior/exterior Neumann traces, namely
\begin{equation}
 \Lambda_\sigma^{\pm} = \gamma_1^\pm \slp \slo^{-1} = (\mp\frac{I}{2} +
 D') S^{-1},
\end{equation}
which proves the first equality. To show the remaining equalities, let
us focus first on the equalities involving the interior Dirichlet to
Neumann map $\Lambda_\sigma^-$. Recall that if $u$ is
$\sigma-$harmonic on $\Omega^-$ then
\begin{equation}
 \begin{bmatrix} \gamma_0 u \\ \gamma_1^- u \end{bmatrix}
 = P^-
 \begin{bmatrix} \gamma_0 u \\ \gamma_1^- u \end{bmatrix}.
 \label{eq:cauchy:int}
\end{equation}
By using the first row in \eqref{eq:cauchy:int} we can solve for the
Neumann data $\gamma_1^- u$ in terms of the Dirichlet data $\gamma_0u$
to obtain $\gamma_1^- u = \slo^{-1} (\frac{I}{2} \mp \dlo) \gamma_0 u$,
which establishes the second equality in \eqref{eq:dtn:def}. The third
equality in \eqref{eq:dtn:def} comes from the second row of
\eqref{eq:cauchy:int} by using $\gamma_1^- u = \slo^{-1} (\frac{I}{2}
\mp \dlo) \gamma_0 u$ and solving for $\gamma_1^- u$ in terms of
$\gamma_0 u$. The corresponding equalities for the exterior Dirichlet to
Neumann map $\Lambda_\sigma^+$ come in a similar fashion from
considering the Cauchy data $\gamma_0 v$, $\gamma_1^+ v$ for a
$\sigma-$harmonic $v$ on $\Omega^+$ with $v|_B = 0$.
\end{proof}
A note of caution: the Neumann traces $\gamma_1^\pm$ and the Dirichlet
to Neumann map depend on the partition of unity $p^\pm$ of the edges.
We now show that the Dirichlet to Neumann map definitions agree with the
traditional Schur complement definitions on either the subgraph $\cG^-$
of $\cG$ with nodes $\partial\Omega \cup \Omega^-$ or the subgraph
$\cG^+$ with nodes $B \cup \Omega^+ \cup \partial\Omega$.

\begin{theorem}
\label{thm:dtn}
The Dirichlet to Neumann map notion \eqref{eq:dtn:def} agrees with the
Schur complement definition (see e.g. \cite{Curtis:1998:CPG}) on the
graphs $\cG^\pm$ with edge weights $\sigma^\pm$. In other words the
following identity holds with $L^\pm \equiv L(\sigma^\pm)$:
\begin{equation}
 \label{eq:dtn:equiv}
 \Lambda_\sigma^\pm = \mp ( L^\pm[\partial\Omega,\partial\Omega] -
 L^\pm[\partial\Omega,\Omega^\pm] L^\pm[\Omega^\pm,\Omega^\pm]^{-1}
 L^\pm[\Omega^\pm,\partial\Omega]).
\end{equation}
\end{theorem}
\begin{proof}
The solution to the interior and exterior Dirichlet problems with boundary data $f$ is $u =
\slp S^{-1} f$. If each of $\cG^\pm$ is considered by itself, the
solution may be written 
 (see e.g. \cite{Curtis:1998:CPG}):
\[
 u|_{\partial\Omega \cup \Omega^\pm} = \begin{bmatrix} I \\
 -L[\Omega^\pm,\Omega^\pm]^{-1}
 L[\Omega^\pm,\partial\Omega] 
 \end{bmatrix} f.
\]
Now from the definition
\eqref{eqn:trace_operator} of the Neumann trace operators we see that
\[
\begin{aligned}
 \gamma_1^- &= R_{\partial\Omega} L^- = \left[ 0 ,
 0 , L^-[\partial\Omega,\partial\Omega] ,
 L^-[\partial\Omega,\Omega^-] \right],~\text{and}\\
 \gamma_1^+ &= -R_{\partial\Omega} L^+ = \left[ 0 ,
 L^+[\partial\Omega,\Omega^+],
 L^-[\partial\Omega,\partial\Omega], 0 \right].
\end{aligned}
\]
The claimed expression comes from calculating $\gamma_1^\pm u$ in terms
of $f$. Notice that for the exterior (resp. interior) Dirichlet problem,
only the graph $\cG^+$ (resp. $\cG^-$) is used. Therefore to define
$\Lambda_\sigma^\pm$, the $\cG^-$ graph (resp. $\cG^+$ graph) and the
values of $u$ therein do not need to be defined as they play no role.
\end{proof}
\subsection{Neumann-Poincar\'e operator}
\label{sec:npo}
The adjoint double boundary layer operator $\dloa$ is also called
\emph{Neumann-Poincar\'e operator}. The key fact to study this operator
is the Plemelj symmetrization principle \eqref{eq:plemelj}, as it states that $\dloa$
and $\dlo$ are similar matrices and therefore have the same spectrum.
Let us start by studying whether $\pm \frac{1}{2}$ are eigenvalues of $\dloa$.
\begin{lemma}
\label{lem:neu:bie:int}
Assume the subgraph $\cG^-$ of $\cG$ induced by the nodes
$\partial\Omega \cup \Omega^-$ is connected\footnote{By this we mean
that to determine connectivity, we only consider the edges $e$ where
$\sigma^-(e) >0$.} with the edge weights $\sigma^-$. Then the
Neumann-Poincar\'e operator $\dloa$ has eigenvalue $-1/2$ with
associated eigenspace spanned by $\slo^{-1} 1$.
\end{lemma}
\begin{proof}
Let $\phi$ be such that $(\dloa + \frac{I}{2})\phi = 0$. Consider $u =
\slp \phi$. By the jump relations we have $\gamma_1^- u = \gamma_1^- \slp \phi
= (\dloa+\frac{I}{2})\phi = 0$. The function $u$ is $\sigma-$harmonic on
$\Omega^-$ so $0 = (\gamma_0 u)^T (\gamma_1^- u) = u^T L(\sigma^-) u$.
By the assumption on the connectivity of $\cG^-$ with the weight
$\sigma^-$, we see that $u$ must be constant on $\partial\Omega \cup
\Omega^-$, say $u = C$. Taking Dirichlet trace $\gamma_0 u = \slo \phi =
C$. Therefore the eigenspace of $\dloa$ associated with eigenvalue
$-1/2$ is spanned by the single vector $\slo^{-1} 1$.  
\end{proof}

A similar proof can be applied to the exterior, however in this case our
choice of homogeneous Dirichlet condition at $B$ prevents finding an
eigenvector. This can be seen as follows.
\begin{lemma}
\label{lem:neu:bie:ext}
Assume the subgraph $\cG^+$ of $\cG$ induced by the nodes $B \cup
\Omega^+ \cup \partial\Omega$ is connected with the edge weights
$\sigma^+$. Then the operator $\dloa - \frac{I}{2}$ is invertible.
\end{lemma}
\begin{proof}
Let $\phi$ be such that $(\dloa - \frac{I}{2})\phi = 0$ and define $u =
\slp \phi$. By the jump relations: $\gamma_1^+ u = \gamma_1^+ \slp \phi
u = (\dloa - \frac{I}{2})\phi = 0$. Now $u$ is $\sigma-$harmonic on
$\Omega^+$ with $u|_B=0$. Therefore $0 = (\gamma_0 u)^T (\gamma_1^+ u) =
- u^T L(\sigma^+) u$. Since $\cG^+$ is connected with edge weights
$\sigma^+$ we conclude that $u$ is constant on
$B\cup\Omega^+\cup\partial\Omega$. Using that $u|_B = 0$ we can see 
that $u=0$. Taking the Dirichlet trace $\gamma_0 u = \slo \phi = 0$ and
using that $\slo$ is invertible we get $\phi=0$, which proves the
desired result.
\end{proof}
The Plemelj  symmetrization principle \eqref{eq:plemelj} also implies the following.
\begin{lemma}
 The Neumann-Poincar\'e operator $\dloa$ has a real spectrum.
\end{lemma}
\begin{proof}
The Plemelj identity \eqref{eq:plemelj} means that $\dloa$ is symmetric
with respect to the $\complex^{|\partial\Omega|}$ inner-product defined
for some $u,v \in \complex^{|\partial\Omega|}$ by $\langle
u, v\rangle_{\slo} = u^* \slo v$.  Indeed we have $\langle \dloa u,v\rangle_{\slo} =
(\dloa u)^* \slo v = u^* \dlo \slo v = u^* \slo \dloa v = \langle u,
\dloa v \rangle_{\slo}$, which shows the desired result.
\end{proof}
Finally we have the following result on the spectrum of $\dloa$, which
is analogous to a similar result in the continuum see e.g. the review
\cite{Ando:2021:SAN}.
\begin{theorem}
If $\lambda$ is an eigenvalue of $\dloa$ and $\lambda \neq \pm
\frac{1}{2}$ then $\lambda \in (-\frac{1}{2},\frac{1}{2})$.
\end{theorem}
\begin{proof}
Let $\lambda$ be an eigenvalue of $\dloa$ with $\lambda \neq \pm
\frac{1}{2}$ and $\phi \neq 0$ be a corresponding eigenvector with
$\dloa \phi = \lambda \phi$. Let $u =
\slp \phi$. We obtain by the jump relations that $\gamma_1^\pm u = (\mp
\frac{I}{2} + D') \phi = (\mp \frac{1}{2} + \lambda) \phi$. With this in
mind we notice that
\[
 (\gamma_0 u)^T (\gamma_1^\pm u) = (\mp \frac{1}{2} + \lambda) (\gamma_0
 u)^T \phi.
\]
By multiplying both sides by $\lambda \pm \frac{1}{2}$:
\[ 
 (\lambda \pm \frac{1}{2}) (\gamma_0 u)^T (\gamma_1^\pm u) = (\lambda^2
 - \frac{1}{4}) (\gamma_0 u)^T \phi.
\]
Since $u$ is $\sigma-$harmonic on $\Omega^-$ we see that $(\gamma_0 u)^T
(\gamma_1^- u) = u^T L(\sigma^-) u$. Similarly since $u$ is
$\sigma-$harmonic on $\Omega^+$ and $u|_B=0$, we get that $(\gamma_0
u)^T (\gamma_1^+u) = -u^TL(\sigma^+) u$. Putting it all together:
\[
 \mp (\lambda \pm \frac{1}{2})  u^TL(\sigma^\pm) u = (\lambda^2 -
 \frac{1}{4}) \phi^T \slo \phi.
\]
Since $\lambda \neq - \frac{1}{2}$ we get that $-u^T L(\sigma^+)u =
(\lambda - \frac{1}{2}) \phi^T \slo \phi$. Now $\phi \neq 0$ and $\slo$
symmetric positive definite  imply that $\phi^T \slo \phi > 0$. But
$L(\sigma^+)$ is symmetric positive semidefinite so $u^T L(\sigma^+) u
\geq 0$. We deduce that $\lambda < \frac{1}{2}$. Similarly since
$\lambda \neq \frac{1}{2}$ we see that $u^T L(\sigma^-) u = (\lambda +
\frac{1}{2}) \phi^T \slo \phi$. With $u^T L(\sigma^-) u \geq 0$, we can
also deduce that $\lambda > -\frac{1}{2}$. Thus we get the desired
result $\lambda \in (-\frac{1}{2},\frac{1}{2})$.
\end{proof}

\subsection{Neumann problem}
\label{sec:neu}
Consider the \emph{interior Neumann problem}:
\begin{equation}
\begin{aligned}
 (L(\sigma^-) u) |_{\Omega^-} &=0,\\
 \gamma_1^- u &= g,\\
\end{aligned}
\label{eq:neu:int}
\end{equation} 
for some $g \in \real^{|\partial\Omega|}$.  The following is a
discrete analogue of a basic fact for the continuum Neumann problem.
\begin{theorem}
 \label{thm:neumann:int}
 Assume the subgraph $\cG^-$ induced by the nodes $\partial\Omega
 \cup \Omega^-$ and with weight $\sigma^-$ is connected. Then the interior Neumann problem admits a unique
 solution up to a constant when the boundary condition $g \in
 \real^{|\partial\Omega|}$ satisfies the compatibility condition $1^T g
 = 0$. Moreover, we can replace $L(\sigma)$ by $L(\sigma^-)$ in
 \eqref{eq:neu:int}.
\end{theorem}
\begin{proof}
Notice that if $u$ is a solution to
\eqref{eq:neu:int} then so is $u+C$ for any constant $C$. Indeed,
for any constant $C$ we have $L(\sigma) C = 0$ and $\gamma_1^- C = 0$.
Now suppose there are two solutions $u_1$ and $u_2$ to \eqref{eq:neu:int}, then
$v = u_1 - u_2$ solves \eqref{eq:neu:int} with Neumann boundary
condition $g=0$. Consider the non-negative quantity 
\[
 v^T L(\sigma^-) v = v^T \nabla^T \diag(\sigma^-) \nabla v = \sum_{e \in
 \cE} \sigma^-(e)  ((\nabla v)(e))^2,
\]
which is the energy needed to maintain the potential $v$ on the subgraph
$\cG^-$ with conductivity $\sigma^-$. Since
$(L(\sigma) v)|_{\Omega^-} = (L(\sigma^-) v)|_{\Omega^-} = 0$, we can
see that:
\[
 v^T L(\sigma^-) v = \begin{bmatrix} v|_{\partial\Omega}\\ v|_{\Omega^-}
 \end{bmatrix}^T \begin{bmatrix} L^-[\partial\Omega,\partial\Omega] v|_{\partial\Omega}+
 L^-[\partial\Omega,\Omega^-] v|_{\Omega^-}\\ 0 \end{bmatrix} =
 v|_{\partial\Omega}^T \gamma_1^- v = 0.
\]
We conclude that $(\nabla v)(e) = 0$ for all edges $e$ for which
$\sigma^-(e) > 0$. Since the graph $\cG^-$ with edge weights $\sigma^-$
is connected, we conclude that $v$ is constant. Thus any two solutions
to \eqref{eq:neu:int} differ by a constant.

We now focus on the compatibility condition. Since constants are in the
nullspace of any graph Laplacian, we have $1^T
L(\sigma^-) u=0$. If in
addition
$u$ is a solution to \eqref{eq:neu:int} we see that
\[
 0 = 1^T L(\sigma^-) u = 1^T (L(\sigma^-) u)|_{\partial\Omega}  =
 1^T \gamma_1^- u = 1^T g,
\]
which is the compatibility condition.
\end{proof}

We can now solve \eqref{eq:neu:int} by taking the ansatz $u = \slp \phi$
for some $\phi \in \real^{|\partial\Omega|}$. Imposing the boundary
condition and using the jump relations we get
\begin{equation}
 g = \gamma_1^- u = \gamma_1^- \slp \phi = (\frac{I}{2} + \dloa) \phi.
 \label{eq:neu:bie}
\end{equation}
Now recall from \cref{lem:neu:bie:int} that $\text{null}(\frac{I}{2} +
\dloa) = \text{span}\,\{\slo^{-1} 1\}$. Using the Plemelj symmetrization
principle \eqref{eq:plemelj} we see that $\text{null}\,(\frac{I}{2} +
\dlo) = \text{span}\,\{1\}$. By the fundamental theorem of linear
algebra $\text{range}\,(\frac{I}{2} + \dloa) = \{1\}^\perp$. Hence the
compatibility condition $1^T g$ ensures that \eqref{eq:neu:bie} admits a
unique solution $\phi$.

We shift gears to the \emph{exterior Neumann problem}
\begin{equation}
 \begin{aligned}
  (L(\sigma^+)u)|_{\Omega^+} &=0,\\
  \gamma_1^+ u &= g,\\
  u|_B &= 0.
 \end{aligned}
\label{eq:neu:ext}
\end{equation}
By using the ansatz $v = \slp \phi$, imposing the boundary condition at
$\partial\Omega$ and the jump relations we see that
\begin{equation}
 g = \gamma_1^+ u = \gamma_1^+ \slp \phi = (-\frac{I}{2} + \dloa) \phi.
\end{equation}
If $\cG^+$ is connected with edge weights $\sigma^+$ we can use
\cref{lem:neu:bie:ext} to conclude that $\phi = (-\frac{I}{2} +
\dloa)^{-1} g$ and thus $u$ admits the unique solution $u = \slp
(-\frac{I}{2} + \dloa)^{-1} g$. This may be surprising at first, but the
Dirichlet boundary condition at the $B$ nodes ensure uniqueness for
this problem.

\section{Active cloaking on graphs}
\label{sec:cloaking}
Consider the problem of hiding an anomaly in a known graph from the
perspective of electrical measurements made away from the anomaly. The
anomaly may consist of a conductivity discrepancy from the known
reference graph, a topological change (e.g. a deleted edge or a new
connection), a current source localized to a few nodes or even some
nodes maintained at a particular potential. We shall use the
reproduction formulas \cref{sec:repro} to hide such anomalies by
injecting or extracting currents at certain nodes surrounding the
anomaly. This \emph{active cloaking} strategy is inspired by similar
techniques for the Laplace and Helmholtz equations
\cite{Miller:2005:OPC, Guevara:2009:AEC, Guevara:2011:ECA,
Guevara:2012:MAT, Guevara:2013:TEA, Norris:2012:SAF, Norris:2014:AEC},
the heat equation \cite{Cassier:2020:ATC, Cassier:2022:AEC}, elastic
waves \cite{Norris:2014:AEC} and flexural waves \cite{ONeill:2015:ACO,
McPhedran:2016:ACO}. One possible motivation for active cloaking on
graphs is for voltage control in an electric distribution grid, where it
is desirable to keep a nominal distribution voltage (e.g. 110V) even in
the presence of a fault. A fault could be, for instance, a new
connection to
the ground (zero potential) or a source (power plant) that is no longer
operational. Of course the weighted graph model we use is too simplistic
to deal with a realistic power grid scenario, since we only consider direct
currents and ignore transient effects.

To fix ideas, consider a known reference graph $\cG$ with nodes 
partitioned as in \cref{sec:setup} and with known conductivity $\sigma$.
We assume that the anomaly is localized in $\Omega^- \setminus
\cN(\partial\Omega)$, so no edge connecting $\Omega^-$ to $\partial
\Omega$ is affected. Our goal is to inject currents in $\partial\Omega$
(and neighboring nodes) to control voltages so that the effect of the
anomaly cannot be detected by means of electrical measurements in the
exterior $\Omega^+$. In \cref{sec:repro} we saw exterior or interior
representation formulas that can control voltages in either $\Omega^+$
or $\Omega^-$. Whether an interior or exterior representation formula is
appropriate depends on the kind of anomaly. We include in the
Supplementary Materials, additional experiments applying this cloaking
method to cloaking anomalies from random walks in the sense of expectation.

\subsection{Cloaking using the interior reproduction formulas}
\label{sec:interior}
Let us start by assuming that the anomaly cannot be detected when
$\partial\Omega$ is set to a constant voltage, say zero. This is the case for
conductivity or topological changes localized to the nodes in
$\Omega^- \setminus \cN(\partial\Omega)$. Indeed, by the maximum
principle the voltages should be constant on $\Omega^-$ as well, so
conductivity or topological changes cannot be detected because there is
no current flow. To put it all together, assume we have a probing field
$u$ that satisfies the Dirichlet problem
\begin{equation}
 (L(\sigma)u)|_{\vint} = 0, ~ u|_B = f,
\end{equation}
for some known $f \in \real^{|B|}$. We can use the interior
representation formula \eqref{eq:intrep+} on $-u$ to obtain a ``cloaking
voltage'' $u_c = - u \ifun_{\Omega^- \cup \partial\Omega}$. By
linearity, the resulting total voltage $u_{\text{tot}} = u + u_c = u
\ifun_{B \cup \Omega^+}$ is indistinguishable from $u$ on $B \cup
\Omega^+$. According to \eqref{eq:intrep+}, this is achieved by
injecting currents on $\cN(\partial\Omega) \setminus \Omega^-$, i.e. since
\begin{equation}
 u_c = -\slp(\gamma_1^+u) + \dlp^+(\gamma_0 u) = G
 \comm{R_{\partial\Omega}^T R_{\partial\Omega} , L(\sigma^+) } u,
\end{equation}
the current density is $\comm{R_{\partial\Omega}^T R_{\partial\Omega} ,
L(\sigma^+) } u$. Similarly cloaking can be achieved by using the other
interior representation formula \eqref{eq:intrep-}, in which case the
currents are injected on $\cN(\partial\Omega) \setminus \Omega^+$. Therefore a
similar cloaking effect can be obtained as long as the anomaly does not
affect such nodes.

To illustrate this technique, we consider the lattice depicted in
\cref{fig:cl_lat}, where the conductivity is constant equal to $1$. The
boundary condition is $0$ at the bottom nodes, $2$ at the top and $1$
at the remaining nodes in $B$. The reference voltage $u$ is shown in
\cref{fig:lat_ex} (left). The voltage $u_*$ with the anomaly is
shown in \cref{fig:lat_ex} (center). Here the anomaly consists of the
node at the center of the lattice having zero voltage. In
\cref{fig:lat_ex} (right) we inject currents using \eqref{eq:intrep+} so
that $u_* + u_c$ is zero around the defective node (i.e. on
$\Omega^-$), without disturbing the voltages in $\Omega^+$.

\begin{figure}
    \centering
    \begin{tabular}{cc}
        \includegraphics[width=0.20\textwidth]{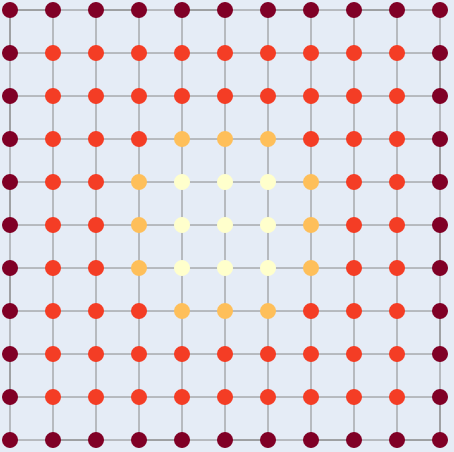} & 
         \includegraphics[width=0.20\textwidth]{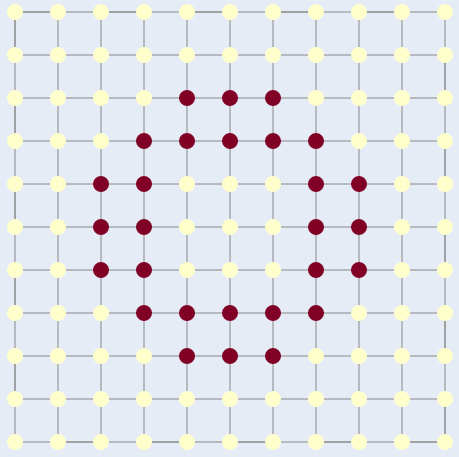}
    \end{tabular}
    \caption{An illustration of the graph setup used for the lattice
    cloaking problem. The node setup of the lattice, from darkest to
    lightest are the sets $B$, $\Omega^+$, $\partial \Omega$, and
    $\Omega^-$ (left). The set $\cN(\partial\Omega) \setminus \Omega^-$
    where currents are injected is shown as shaded nodes (right).} 
    \label{fig:cl_lat}
\end{figure}

\begin{figure}
    \centering
    \begin{tabular}{ccc}
        \includegraphics[width=0.20\textwidth]{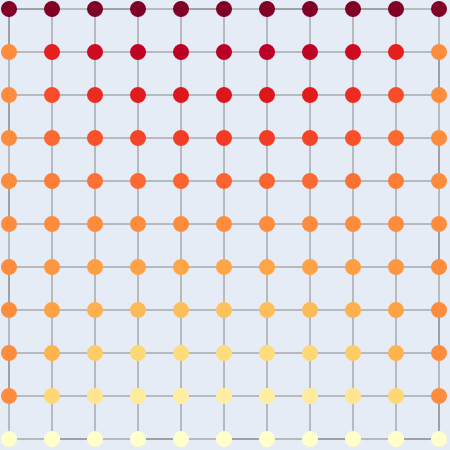} & 
         \includegraphics[width=0.20\textwidth]{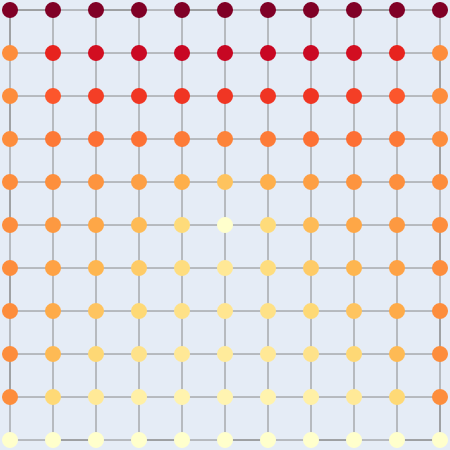} &
         \includegraphics[width=0.20\textwidth]{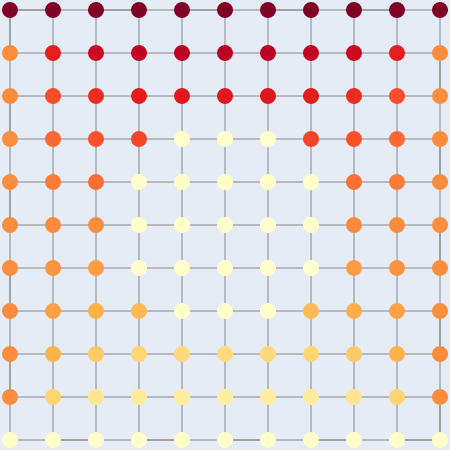} 
    \end{tabular}
    \caption{An example of cloaking a defective node on a lattice. The
    reference solution $u$ (left) is compared to the solution
    $u_*$ with a defective node (center) with zero Dirichlet
    condition (the center node of the lattice). Cloaking of the
    defective node is achieved by using the interior representation
    formula \eqref{eq:intrep+}, creating the voltage $u_* +
    u_c$ (right).} 
    \label{fig:lat_ex}
\end{figure}
\subsection{Cloaking using the exterior reproduction formulas}
\label{sec:exterior} 
Now assume that the anomaly generates currents at some nodes in
$\Omega^-$, then the resulting voltage $u_*$ satisfies
\begin{equation}
 (L(\sigma) u_*)|_{\Omega^+} = 0,~u_*|_B = 0.
\end{equation}
Therefore as long as the anomalous source inside $\Omega^-$ does not
coincide with the nodes $\cN(\partial\Omega)$, we can use either the
exterior reproduction formulas \eqref{eq:extrep+} or \eqref{eq:extrep-}
on $-u_*$ to generate a cloaking field $u_c$ such that the total field
$u_{\text{tot} = u_* + u_c}$ vanishes at the nodes $\Omega^+\cup B$,
preventing detection of the anomaly from the perspective of electric
measurements made at $\Omega^+ \cup B$. The particular exterior
reproduction formula dictates the injection nodes for the currents
needed to generate $u_c$. For formula \eqref{eq:extrep+}, the injection
locations are at the nodes $\cN(\partial\Omega) \setminus \Omega^-$
and give $u_c = -u_* \ifun_{B\cup \Omega^+}$ so that $u_{\text{tot}} =
u_* \ifun_{\partial\Omega \cup \Omega^-}$.  Whereas for
\eqref{eq:extrep-}, the currents are injected at the nodes 
$\cN(\partial\Omega) \setminus \Omega^+$ so that $u_c = -u_*
\ifun_{B\cup \Omega^+ \cup \partial\Omega}$ and $u_{\text{tot}} = u_*
\ifun_{\Omega^-}$. Both approaches are illustrated in \cref{fig:srccl}
in the same lattice graph. This time the anomaly consists in injecting
currents at a single node at the lattice center and we can observe how
the total field vanishes on $\Omega^+$ while being unmodified near the
anomaly.

\begin{figure}[h]
    \centering
    \begin{tabular}{ccc}
        \includegraphics[width=0.20\textwidth]{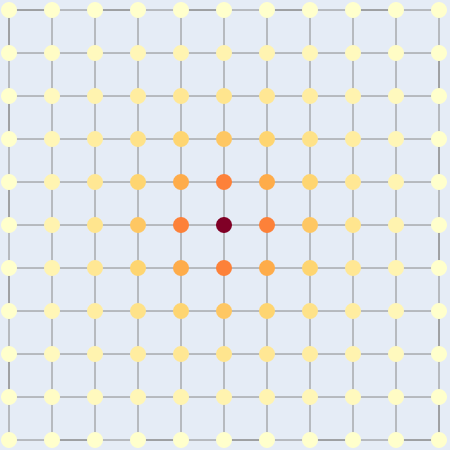} & 
        \includegraphics[width=0.20\textwidth]{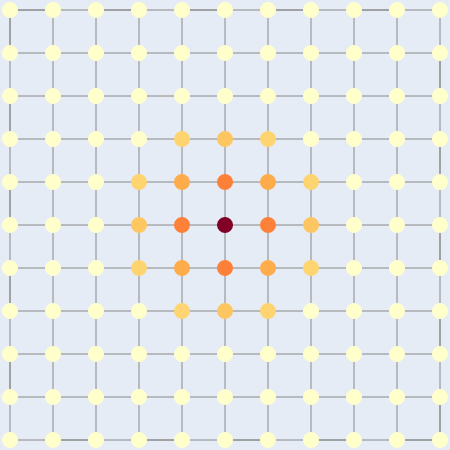} &
         \includegraphics[width=0.20\textwidth]{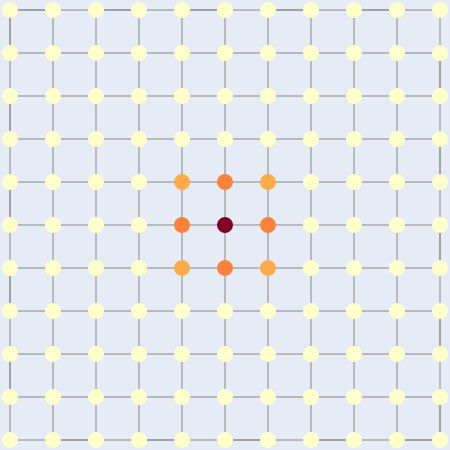} 
    \end{tabular}
    \caption{A numerical example of cloaking a source on a lattice. The
    source field without the cloak (left) is compared to the field with
    the cloak active. In the center, we inject currents at
    $\cN(\partial\Omega) \setminus \Omega^-$ and the right at
    $\cN(\partial\Omega) \setminus \Omega^+$. Notice how the fields in
    the middle and right panes are very close to zero outside of
    $\Omega^-$.}
    \label{fig:srccl}
\end{figure}

\begin{remark}\label{rem:mimicking}
There other variations of this problem that we have not considered. One
is the \emph{mimicking problem}, which consists in making an object or
source in the cloaked region appear as if it were another object or
source from the perspective of measurements made away from the cloaked
region \cite{Lai:2009:IOT}. As shown in \cite{Zheng:2010:EOC} in the
continuum, active cloaking strategies are well adapted to achieve this.
Although not illustrated here, mimicking can also be achieved for
objects or sources inside $\Omega^-$ by cancelling out the probing field
inside $\Omega^-$ and using an exterior reproduction formula to
replicate a field imitating another object or source, as observed in the
exterior $\Omega^+$.
\end{remark}

\section{Summary and perspectives}
\label{sec:summary}
We have constructed a discrete potential theory that relies on a
partition of the Laplacian and that includes the familiar boundary layer
and boundary potential operators from the continuum. Crucially, our
formalism makes only very mild assumptions about the topology, roughly
that nodes are separated by the boundary $\partial\Omega$ into ``interior'' and
``exterior''. With this in place, we can express solutions to different
boundary value problems in terms of ``discrete boundary integral
equations'' that involve the different boundary layer operators that we
defined.  So far we have only proved that a few basic results from
potential theory also hold for weighted graphs, however we believe other
results can be translated. Another
motivation behind this formalism is to bring the language from the
continuum boundary integral equations to numerical methods, aiming at
better mimicking the continuum properties. One drawback in our approach
is the reliance on the Green function, which may be expensive to compute
and store.  However this is a fixed cost and only the Green functions
associated with $\partial\Omega$ are needed. Applying our results to the
discrete equivalent of the Helmholtz equation would be interesting.
Another natural generalization of our approach is to periodic infinite
lattices.  In this case the Green function is often known explicitly and
is location invariant.

\textbf{Data Access.}  All the code needed to reproduce the results in
this paper is available in the repository \cite{thecode} and is written
using the Julia programming language. The figures can be reproduced
using \texttt{RW\_cloaking\_demo.ipynb}. The main discrete potential
theory identities can be verified numerically on a small randomly generated
weighted graph with \texttt{verification.ipynb}. The supplementary material
\texttt{rwvideo.mp4} can be generated using \texttt{rwanim.jl}. Please
see the file \texttt{README.md} in the repository for more details.

\textbf{Acknowledgements.} The authors would like to acknowledge
stimulating discussions with Maxence Cassier, S\'ebastien Guenneau and
Grigorios A. Pavliotis about controlling random walks, which inspired
this work.

\appendix
\section{Supplementary materials}

We recall in \cref{sec:rw} a definition of random walks on graphs, which
differs a bit from the more common definition in that the particles
carry a charge. In some sense this is closer to the physical behavior in
resistor networks, where the particles are electrons. Then in
\cref{sec:rwen} we tie the expected charges at the nodes to
$\sigma-$harmonic functions on graphs. Finally in \cref{sec:acrw} we
report some numerical experiments for active cloaking for random walks.
The material here is for the most part self-contained, but assumes
familiarity with the main document's notation. 

\subsection{Random walks in weighted graphs}
\label{sec:rw}
We consider random walks of charged particles on the graph $\cG$ that
terminate at the boundary $B$ and where the charge is determined by the
random walk's starting node \cite{Georgakopoulos:2011:ENR}. The
transition probability is determined by a conductivity $\sigma \in
(0,\infty)^{|\cE|}$. To be more precise, the random walks can be modeled
by a sequence of random variables $X_0,X_1,X_2,\ldots$ with
\begin{equation}
    \proba(X_0 = x_0, X_1 = x_1,\ldots,X_n = x_n) = 
    \pi(x_0) p(x_0,x_1) p(x_1,x_2) \cdots p(x_{n-1},x_n),
\end{equation}
where $\pi(x)$ is the initial particle probability distribution and
$p(x,y)$ is the probability that a particle at $x \in \cV$ goes
to $y \in \cV$, which for $x$ and $y$  connected by an edge $\{x
, y\} \in \cE$ is given by
\begin{equation}\label{eqn:tran_prob}
    p(x,y) = \sigma(\{x,y\})/c(x),
    ~\text{where}~
    c(x) = \sum_{z~\text{s.t.}~\{x,z\}\in\cE} 
              \sigma(\{x ,z\}),~\text{for}~x\in \cV,
\end{equation}
and $p(x,y) = 0$ if there is no edge between $x$ and $y$. The definition
of the transition probabilities \eqref{eqn:tran_prob} results in a
reversible Markov chain, see e.g. \cite{Lyons:2016:PTN}.
The first time that a particle hits the boundary is
\begin{equation}
    \tau_B = \inf \{ n\geq 1 ~|~ X_n \in B \}.
\end{equation}
We assume that the initial particle distribution is uniform on a subset
$\cU$ of the vertices $\cV$ i.e.
\begin{equation}
 \pi(x) = \frac{1}{|\cU|}\ifun_{\cU}(x),
\end{equation}
where $\ifun_{\cU}$ is the indicator function of the set
$\cU$ (equal to one for $x \in \cU$ and zero otherwise)
and $|\cU|$ is the cardinality of $\cU$. Let $\hat{q}
\in \real^{|\cU|}$ be the charges that are assigned to particles
departing from $\cU$, and extend $\hat{q}$ by zero outside of
$\cU$. The \emph{expected charge} that a node $x$ encounters is
defined by $q \in \real^{|\cV|}$ with 
\begin{equation}
q(x) = |\cU| \, \expect \left[ 
  \hat{q}(X_0) \sum_{i=0}^{\tau_B - 1} \ifun_{\{ X_i = x\}} 
  \right].
\label{eq:q}
\end{equation}
We emphasize that in our setup the random walk starting vertices can
potentially be on any node and are not restricted to $B$ as in
\cite{Georgakopoulos:2011:ENR}.
\begin{lemma}
\label{lem:echarge}
The expected charge  satisfies
\begin{equation}
q(x) = 
 \begin{cases}
    \hat{q}(x), 
    &\text{if}~x\in B,\\
    \displaystyle
    \hat{q}(x) + 
    \sum_{y~\text{s.t.}~\{x,y\}\in\cE} p(y,x)q(y), 
    &\text{if}~x\in \vint= \cV \setminus B.
 \end{cases}
\end{equation}
\end{lemma}
\begin{proof}
Let $x\in B$. Since random walks terminate at the boundary, the only
time that a random walk may encounter an $x \in B$ is $i=0$, thus the
sum in \eqref{eq:q} reduces to one term
\begin{equation}
    q(x) = |\cU| \, \expect \left[ \hat{q}(x) \ifun_{\{ X_0 = x\} }\right] 
    = |\cU| \hat{q}(x) \pi(x) = \hat{q}(x) \ifun_{\cU} (x),
    ~\text{for}~x\in B.
\end{equation}
For other vertices $x \in \vint$, we have
\[
\begin{aligned}
q(x) &= |\cU| \expect \left[ \hat{q}(x) \ifun_{\{X_0=x\}} \right] 
      + |\cU| \expect \left[ \hat{q}(X_0) 
      \sum_{i=1}^{\tau_B - 1}\ifun_{\{ X_i = x\}} \right]\\
&= |\cU| \hat{q}(x) \pi(x)  
 + |\cU|\expect \left[ \hat{q}(X_0) \sum_{i=1}^{\tau_B - 1} 
   \sum_{y~\text{s.t.}~\{x,y\}\in\cE} \ifun_{\{ X_{i-1} = y, X_i = x\}}\right]\\
&= \hat{q}(x) 
+\sum_{y~\text{s.t.}~\{x,y\}\in\cE} p(y,x)|\cU|
  \expect \left[ \hat{q}(X_0) \sum_{i=1}^{\tau_B - 1} \ifun_{\{ X_{i-1} = y\}} \right].
\end{aligned}
\]
The equality is unaffected if in the last summation we take the upper
bound for $i$ to be $\tau_B$ instead of $\tau_B -1$. Indeed, $y$ is the
particle location before $x$ and thus cannot be on the boundary. By
changing indices to $j=i-1$ in the summation, we recognize $q(y)$ and
obtain the desired result 
\[
q(x)=\hat{q}(x) + \sum_{y~\text{s.t.}~\{x,y\}\in\cE} p(y,x)q(y).
\]
\end{proof}

\subsection{Random walks and electrical networks}
\label{sec:rwen}
Define the function
\begin{equation}\label{eqn:charges}
u = q/c \in \real^{|\cV|},
\end{equation}
where the division is understood componentwise. By using
\cref{lem:echarge} and 
\begin{equation}
 \sum_{y~\text{s.t.}~\{x,y\}\in\cE} p(x,y) = 1,
  ~\text{for a fixed}~ x \in \cV,
\end{equation}
we see that $u$ satisfies
\begin{equation}
 \label{eq:kirchhoff}
 \begin{aligned}
    \sum_{y~\text{s.t.}~\{x,y\}\in\cE} \sigma(\{x,y\}) (u(x) - u(y)) 
    &= \hat{q}(x),
    &&~\text{for}~x \in \cV \setminus B,\\[-1em]
    u(x) & = \hat{q}(x) / c(x),&&~\text{for}~x \in B.
 \end{aligned}
\end{equation}
Thus $u$ can be interpreted as the voltages at the nodes of a resistor
network with edge conductances given by $\sigma \in
(0,\infty)^{|\cE|}$, where the voltages at the boundary nodes 
are $\hat{q}|_B/c|_B$ and currents of $\hat{q}|_{\vint}$ are injected at
all other nodes. It  is convenient to rewrite \eqref{eq:kirchhoff} in
terms of the weighted graph Laplacian $L(\sigma) = \nabla^T
\diag(\sigma) \nabla$, see \cref{sec:gl} in the main document for
the notation. Let $f \in \real^{|B|}$.  Rewriting \eqref{eq:kirchhoff}
with the graph Laplacian we obtain
\begin{equation}
\label{eq:lap:prob}
\begin{split}
(L(\sigma)u)|_{\vint} &= \hat{q}|_{\vint}, \\
u|_B &= \hat{q}|_B/c|_B.
\end{split}
\end{equation}
Since $L(\sigma)u$ calculates the net currents at all the nodes required
to maintain the potential $u$, the first equation in \eqref{eq:lap:prob}
is a current conservation law: the sum of all the currents leaving a
node $x\in \vint$ is equal to the injected currents $\hat{q}|_{\vint}$.
The second equation in \eqref{eq:lap:prob} corresponds to fixing the
voltages of the boundary nodes to $\hat{q}|_B/c|_B$. If there are no
sources or sinks of particles in $\vint$ i.e.
\begin{equation}\label{eqn:dir_prob}
\begin{split}
(L(\sigma)u)|_{\vint} &= 0, \\
u|_B &= f,
\end{split}
\end{equation}
we obtain the \emph{Dirichlet problem} with (Dirichlet) boundary
condition $f \in \real^{|B|}$. Since $\cG$ is connected, the Dirichlet
problem admits a unique solution for all $f \in \real^{|B|}$, see e.g.
\cite{Curtis:2000:IPE,Chung:1997:SGT}.  Problem \eqref{eq:lap:prob} is
also guaranteed by connectivity to admit a unique solution for any
$\hat{q} \in \real^{|\cV|}$.  We note that the solution to
\eqref{eq:lap:prob} (rescaled by $c$, see \eqref{eqn:charges}) with
$\hat{q}|_B = 0$ is the expected charges (rescaled by $c$, see
\eqref{eqn:charges}) resulting from random walks starting at
$\mathcal{U} = \vint$ and terminating at $B$, with charges at $\vint$
given by $\hat{q}|_{\vint}$.

A numerical example illustrating the relation between the random walk
process with charged particles and the Dirichlet problem
\eqref{eqn:dir_prob} is given in \cref{fig:dir_graph}. In this example,
the edge conductivities are chosen at random from $U(1,2)$ and the
boundary values are chosen at random from $U(0,1)$, where $U(a,b)$ is a
uniform distribution with minimum value $a$ and maximum value $b$. The
solution obtained using the empirical net charges is denoted by
$\tilde{u}$ and is calculated using $10^5$ realizations. The relative
error $E(u,\tilde{u};\cV) \approx 0.47 \%$ where $u$ is the
deterministic solution obtained by solving \eqref{eqn:dir_prob}. The
relative error between $f,g \in \real^{\cV}$, restricted to some set $A
\subset \cV$ is
\begin{equation}\label{eqn:rel_err}
    E(f,g;A)= \frac{\|(f-g)|_A\|_2}{\|f|_A\|_2},~\text{for}~f\neq 0.
\end{equation}

\begin{figure}[h]
\centering
\begin{tabular}{c}
    \includegraphics[width=\textwidth]{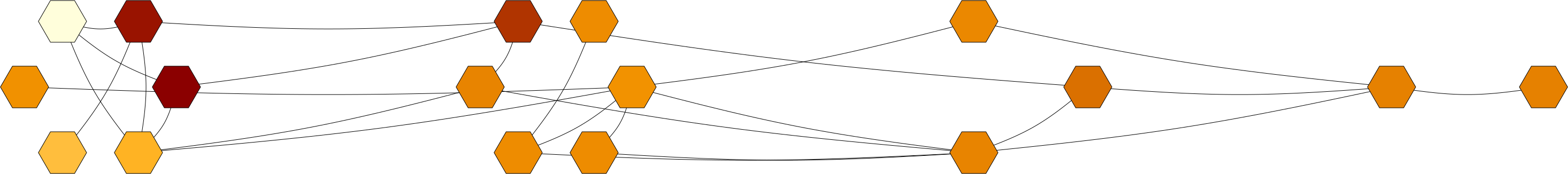}  \\
     \includegraphics[width=\textwidth]{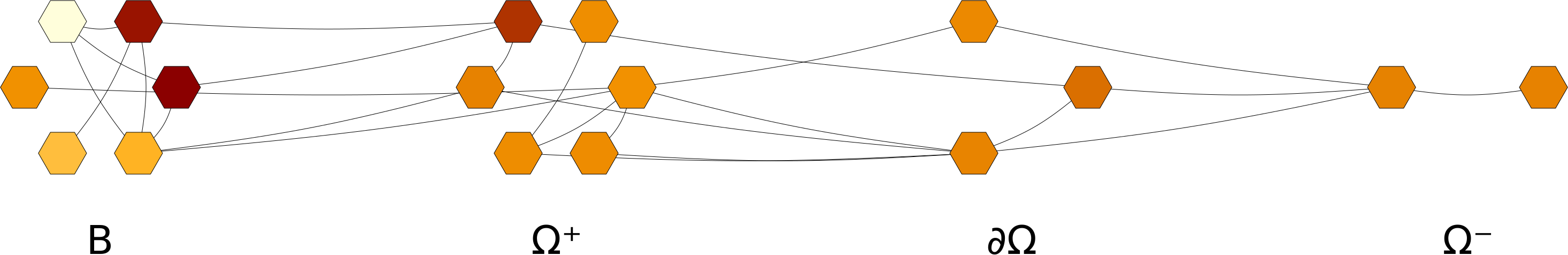}
\end{tabular}
    \caption{Solution of the Dirichlet problem (top) and random walk
    solution given by \eqref{eqn:charges} (bottom). Darker shades
    indicate higher values and lighter shades indicate smaller values.} 
    \label{fig:dir_graph}
\end{figure}

\begin{remark}\label{rem:interpretation}
A more common relation between random walks in graphs and the Dirichlet
problem is given in  \cite{Doyle:1984:RWE,Lyons:2016:PTN}, where the
random walks start at nodes in $\vint$ and terminate at the boundary
nodes $B$. When the random walk terminates at $x \in B$, the payoff
value (which may be negative) is given by $f(x)$, where $f \in
\real^{|B|}$. Then, the expected payoff for a random walk starting at
any node $x$ is given by $u(x)$ solving the Dirichlet problem
\eqref{eqn:dir_prob} with $u|_B = f$. We chose instead the ``charged
particles'' point of view in \cite{Georgakopoulos:2011:ENR}, because it
allows us to restrict the random walk starting nodes (or particle
injection locations) to the boundary $B$ or to another node subset
of $\cV$.
\end{remark}

\subsection{Active cloaking for random walks}
\label{sec:acrw}
We now apply the active cloaking strategy from \cref{sec:cloaking} to
random walks. Indeed the currents that need to be injected at nodes
in $\cN(\partial\Omega)$ can be interpreted as the charges of particles
injected at $\cN(\partial\Omega)$.  To see this, let us first consider a
solution $u$ to the Dirichlet problem \eqref{eqn:dir_prob} with $u|_B =
f$. In terms of random walks, $q = cu$ are the expected charges
resulting from charged particles doing random walks starting at $B$ with
charges $c|_B f$.  By applying the interior representation formula
\eqref{eq:intrep+} to $-u$, we can define a charge distribution for
injected particles on $\cN(\partial\Omega) \setminus \Omega^-$ given by
$\phi_c^+ = \comm{R_{\partial\Omega}^T R_{\partial\Omega}, L(\sigma^+)}$
so that the resulting expected charge distribution is $q_c^+ =
-cu\ifun_{\partial\Omega \cup \Omega^-}$. By linearity the expected
charge distribution created by the particles injected at
$\cN(\partial\Omega) \setminus \Omega^-$ is then $q_{\text{tot}}^+  = c
u_{\text{tot}}^+ =  c u \ifun_{B \cup \Omega^+}$. This is illustrated
in \cref{fig:cl_graph} (top) for the $u$ reported in
\cref{fig:dir_graph}. We use tildes to denote quantities that are
obtained empirically. In this case with $10^5$ random walks, the error
was  $E(u^+_c,\tilde{u}^+_c, \cV)\approx0.47\%$, calculated
using \eqref{eqn:rel_err}.

If we instead apply \eqref{eq:intrep-} on $-u$, the charge distribution
for injected particles at $\cN(\partial\Omega) \setminus \Omega^+$ is
given by $\phi_c^- = -\comm{R_{\partial\Omega}^T R_{\partial\Omega},
L(\sigma^-)}$. The resulting total charge distribution is
$q_{\text{tot}}^-  = c u_{\text{tot}}^- = c u \ifun_{B \cup \Omega^+
\cup \partial\Omega}$.  As before, we calculated an empirical
$\tilde{u}_{\text{tot}}$ for $10^5$ random walks and report it in
\cref{fig:cl_graph} (bottom). The error was $E(u^-_c,\tilde{u}^-_c,
\cV)\approx1.76\%$.

\begin{figure}[h]
\centering
\begin{tabular}{cc}
    \includegraphics[width=\textwidth]{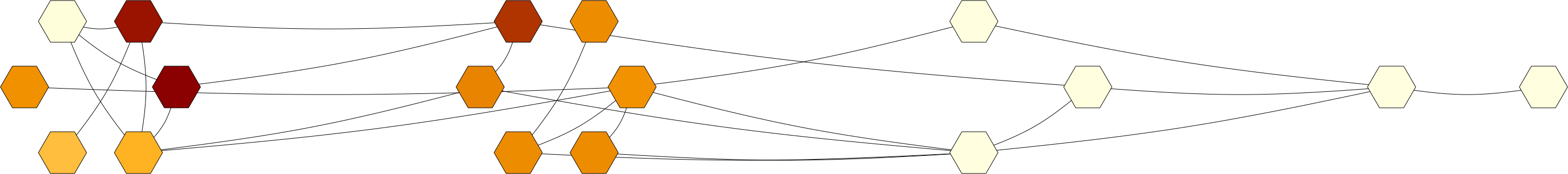}  \\
     \includegraphics[width=\textwidth]{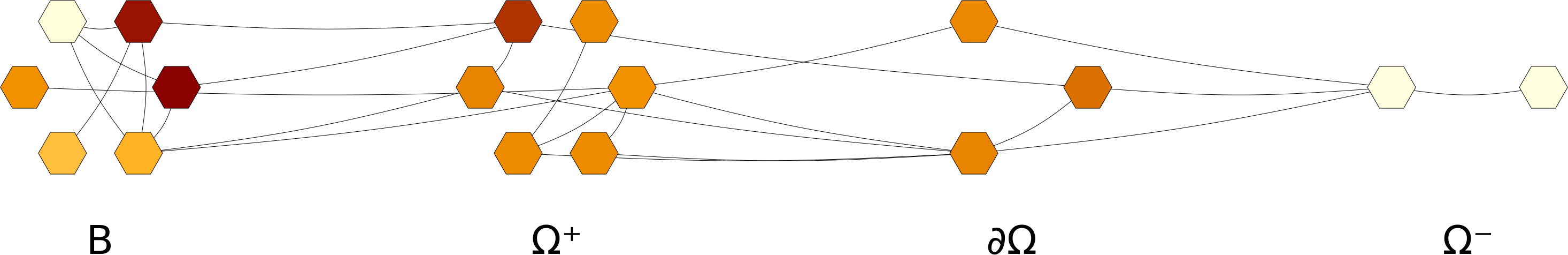}
\end{tabular}
    \caption{Total fields in the active cloaking problems using the
    interior reproduction formula \eqref{eq:intrep+} (top) and 
    \eqref{eq:intrep-} (bottom). The empirical total fields 
    $\tilde{u}_{\text{tot}}^{\pm}$ were calculated using $10^5$
    random walks.} 
    \label{fig:cl_graph}
\end{figure}

We also simulated the cloaking examples in a lattice from the main text
using random walks (\cref{fig:lat_ex,fig:srccl}). For the interior
problem (\cref{fig:lat_ex}), we have $E(u_c,\tilde{u}_c,
\cV)\approx0.38\%$ with $10^5$ random walks. Similarly in 
\cref{fig:srccl} we have $E(u_\text{tot}^+,\tilde{u}_\text{tot}^+,
\cV)\approx0.46\%$ and $E(u_\text{tot}^-,\tilde{u}_\text{tot}^-,
\cV)\approx0.73\%$. 

We also include the animation \texttt{rwvideo.mp4}, which represents a
study done on a $20 \times 20$ lattice with edges having the same
conductivity. This study compares the empirical expected net charges
stemming from: (a) particles originating at the boundary $B$ (the
uncloaked configuration), (b) particles originating at an active set
$\cN(\partial\Omega)\setminus \Omega^-$ (the cloak by itself) and (c) particles originating at both $B$
and $\cN(\partial\Omega)\setminus \Omega^-$ (the cloaked configuration). Rather than calculating the
expected net charges when all the random walks are finished, we
periodically start batches of random walkers. As expected (b) shows
charges approaching the negative of those in (a) inside the cloaked
region and zero outside and (c) shows net charges approaching those in
(a) outside of the cloaked region and zero inside.

\bibliographystyle{elsarticle-harv}
\bibliography{dpt}

\begin{thebibliography}{35}
\expandafter\ifx\csname natexlab\endcsname\relax\def\natexlab#1{#1}\fi
\providecommand{\url}[1]{\texttt{#1}}
\providecommand{\href}[2]{#2}
\providecommand{\path}[1]{#1}
\providecommand{\DOIprefix}{doi:}
\providecommand{\ArXivprefix}{arXiv:}
\providecommand{\URLprefix}{URL: }
\providecommand{\Pubmedprefix}{pmid:}
\providecommand{\doi}[1]{\href{http://dx.doi.org/#1}{\path{#1}}}
\providecommand{\Pubmed}[1]{\href{pmid:#1}{\path{#1}}}
\providecommand{\bibinfo}[2]{#2}
\ifx\xfnm\relax \def\xfnm[#1]{\unskip,\space#1}\fi
\bibitem[{Ando et~al.(2021)Ando, Kang, Miyanishi and Putinar}]{Ando:2021:SAN}
\bibinfo{author}{Ando, K.}, \bibinfo{author}{Kang, H.},
  \bibinfo{author}{Miyanishi, Y.}, \bibinfo{author}{Putinar, M.},
  \bibinfo{year}{2021}.
\newblock \bibinfo{title}{Spectral analysis of {Neumann}-{Poincar{\'e}}
  operator}.
\newblock \bibinfo{journal}{Rev. Roum. Math. Pures Appl.} \bibinfo{volume}{66},
  \bibinfo{pages}{545--575}.
\bibitem[{Bendito et~al.(2008)Bendito, Carmona and Encinas}]{Bendito:2008:BVP}
\bibinfo{author}{Bendito, E.}, \bibinfo{author}{Carmona, A.},
  \bibinfo{author}{Encinas, A.M.}, \bibinfo{year}{2008}.
\newblock \bibinfo{title}{Boundary value problems on weighted networks}.
\newblock \bibinfo{journal}{Discrete Appl. Math.} \bibinfo{volume}{156},
  \bibinfo{pages}{3443--3463}.
\newblock \DOIprefix\doi{10.1016/j.dam.2008.02.008}.
\bibitem[{Bendito et~al.(2007)Bendito, Carmona, Encinas and
  Gesto}]{Bendito:2007:PTB}
\bibinfo{author}{Bendito, E.}, \bibinfo{author}{Carmona, A.},
  \bibinfo{author}{Encinas, A.M.}, \bibinfo{author}{Gesto, J.M.},
  \bibinfo{year}{2007}.
\newblock \bibinfo{title}{Potential theory for boundary value problems on
  finite networks}.
\newblock \bibinfo{journal}{Appl. Anal. Discrete Math.} \bibinfo{volume}{1},
  \bibinfo{pages}{299--310}.
\bibitem[{Betcke and Scroggs(2021)}]{Betcke:2021:BEMPP}
\bibinfo{author}{Betcke, T.}, \bibinfo{author}{Scroggs, M.W.},
  \bibinfo{year}{2021}.
\newblock \bibinfo{title}{Bempp-cl: A fast {P}ython based just-in-time
  compiling boundary element library.}
\newblock \bibinfo{journal}{Journal of Open Source Software}
  \bibinfo{volume}{6}, \bibinfo{pages}{2879}.
\newblock \URLprefix \url{https://doi.org/10.21105/joss.02879},
  \DOIprefix\doi{10.21105/joss.02879}.
\bibitem[{Bhamidipati(2021)}]{Bhamidipati:2021:FIM}
\bibinfo{author}{Bhamidipati, V.}, \bibinfo{year}{2021}.
\newblock \bibinfo{title}{Forward and inverse modeling of conducting lattices
  using lattice Green's functions}.
\newblock Ph.D. thesis. UT Austin.
\bibitem[{Cassier et~al.(2021)Cassier, De{G}iovanni, Guenneau and
  Guevara~Vasquez}]{Cassier:2020:ATC}
\bibinfo{author}{Cassier, M.}, \bibinfo{author}{De{G}iovanni, T.},
  \bibinfo{author}{Guenneau, S.}, \bibinfo{author}{Guevara~Vasquez, F.},
  \bibinfo{year}{2021}.
\newblock \bibinfo{title}{Active thermal cloaking and mimicking}.
\newblock \bibinfo{journal}{Proc. R. Soc. A} \bibinfo{volume}{477}.
\newblock \DOIprefix\doi{10.1098/rspa.2020.0941}.
\bibitem[{Cassier et~al.(2022)Cassier, De{G}iovanni, Guenneau and
  Guevara~Vasquez}]{Cassier:2022:AEC}
\bibinfo{author}{Cassier, M.}, \bibinfo{author}{De{G}iovanni, T.},
  \bibinfo{author}{Guenneau, S.}, \bibinfo{author}{Guevara~Vasquez, F.},
  \bibinfo{year}{2022}.
\newblock \bibinfo{title}{Active exterior cloaking for the two-dimensional
  {H}elmholtz equation with complex wavenumbers and application to thermal
  cloaking}.
\newblock \bibinfo{journal}{Philos. Trans. Roy. Soc. A} \bibinfo{volume}{380},
  \bibinfo{pages}{Paper No. 20220073, 29}.
\newblock \DOIprefix\doi{10.1098/rsta.2022.0073}.
\bibitem[{Chung(1997)}]{Chung:1997:SGT}
\bibinfo{author}{Chung, F.R.K.}, \bibinfo{year}{1997}.
\newblock \bibinfo{title}{Spectral graph theory}. volume~\bibinfo{volume}{92}
  of \textit{\bibinfo{series}{Reg. Conf. Ser. Math.}}
\newblock \bibinfo{publisher}{Providence, RI: AMS, American Mathematical
  Society}.
\bibitem[{Colton and Kress(2013)}]{Colton:2013:IEM}
\bibinfo{author}{Colton, D.}, \bibinfo{author}{Kress, R.},
  \bibinfo{year}{2013}.
\newblock \bibinfo{title}{Integral equation methods in scattering theory}.
\newblock \bibinfo{publisher}{SIAM}.
\bibitem[{Curtis et~al.(1998)Curtis, Ingerman and Morrow}]{Curtis:1998:CPG}
\bibinfo{author}{Curtis, E.B.}, \bibinfo{author}{Ingerman, D.},
  \bibinfo{author}{Morrow, J.A.}, \bibinfo{year}{1998}.
\newblock \bibinfo{title}{Circular planar graphs and resistor networks}.
\newblock \bibinfo{journal}{Linear Algebra Appl.} \bibinfo{volume}{283},
  \bibinfo{pages}{115--150}.
\newblock \DOIprefix\doi{10.1016/S0024-3795(98)10087-3}.
\bibitem[{Curtis and Morrow(2000)}]{Curtis:2000:IPE}
\bibinfo{author}{Curtis, E.B.}, \bibinfo{author}{Morrow, J.A.},
  \bibinfo{year}{2000}.
\newblock \bibinfo{title}{Inverse problems for electrical networks}.
  volume~\bibinfo{volume}{13} of \textit{\bibinfo{series}{Ser. Appl. Math.,
  Singap.}}
\newblock \bibinfo{publisher}{River Edge, NJ: World Scientific}.
\bibitem[{De{G}iovanni and Guevara~Vasquez(2024)}]{thecode}
\bibinfo{author}{De{G}iovanni, T.}, \bibinfo{author}{Guevara~Vasquez, F.},
  \bibinfo{year}{2024}.
\newblock \bibinfo{title}{Code to generate figures in ``cloaking for random
  walks using a discrete potential theory''}.
\newblock \bibinfo{howpublished}{\url{https://github.com/fguevaravas/crwdpt}}.
\bibitem[{Dom{\'{i}}nguez et~al.(2014a)Dom{\'{i}}nguez, Lu and
  Sayas}]{Dominguez:2014:FDC}
\bibinfo{author}{Dom{\'{i}}nguez, V.}, \bibinfo{author}{Lu, S.},
  \bibinfo{author}{Sayas, F.J.}, \bibinfo{year}{2014}a.
\newblock \bibinfo{title}{A fully discrete {Calder\'{o}n} calculus for two
  dimensional time harmonic waves}.
\newblock \bibinfo{journal}{Int. J. Numer. Anal. Model.} \bibinfo{volume}{11},
  \bibinfo{pages}{332--345}.
\newblock \URLprefix
  \url{http://www.math.ualberta.ca/ijnam/Volume-11-2014/No-2-14/2014-02-06.pdf}.
\bibitem[{Dom{\'{i}}nguez et~al.(2014b)Dom{\'{i}}nguez, Lu and
  Sayas}]{Dominguez:2014:NFC}
\bibinfo{author}{Dom{\'{i}}nguez, V.}, \bibinfo{author}{Lu, S.L.},
  \bibinfo{author}{Sayas, F.J.}, \bibinfo{year}{2014}b.
\newblock \bibinfo{title}{A {Nystr\"{o}m} flavored {Calder\'{o}n} calculus of
  order three for two dimensional waves, time-harmonic and transient}.
\newblock \bibinfo{journal}{Comput. Math. Appl.} \bibinfo{volume}{67},
  \bibinfo{pages}{217--236}.
\newblock \DOIprefix\doi{10.1016/j.camwa.2013.11.005}.
\bibitem[{Dom{\'{i}}nguez et~al.(2015)Dom{\'{i}}nguez, S{\'{a}}nchez-Vizuet and
  Sayas}]{Dominguez:2015:FDC}
\bibinfo{author}{Dom{\'{i}}nguez, V.}, \bibinfo{author}{S{\'{a}}nchez-Vizuet,
  T.}, \bibinfo{author}{Sayas, F.J.}, \bibinfo{year}{2015}.
\newblock \bibinfo{title}{A fully discrete {Calder\'{o}n} calculus for the
  two-dimensional elastic wave equation}.
\newblock \bibinfo{journal}{Comput. Math. Appl.} \bibinfo{volume}{69},
  \bibinfo{pages}{620--635}.
\newblock \DOIprefix\doi{10.1016/j.camwa.2015.01.016}.
\bibitem[{Doyle and Snell(1984)}]{Doyle:1984:RWE}
\bibinfo{author}{Doyle, P.G.}, \bibinfo{author}{Snell, J.L.},
  \bibinfo{year}{1984}.
\newblock \bibinfo{title}{Random walks and electric networks}.
  volume~\bibinfo{volume}{22} of \textit{\bibinfo{series}{Carus Mathematical
  Monographs}}.
\newblock \bibinfo{publisher}{Mathematical Association of America, Washington,
  DC}.
\bibitem[{Evans(2010)}]{Evans:2010:PDE}
\bibinfo{author}{Evans, L.C.}, \bibinfo{year}{2010}.
\newblock \bibinfo{title}{Partial differential equations}.
  volume~\bibinfo{volume}{19} of \textit{\bibinfo{series}{Grad. Stud. Math.}}
\newblock \bibinfo{edition}{2nd ed.} ed., \bibinfo{publisher}{Providence, RI:
  American Mathematical Society (AMS)}.
\bibitem[{Georgakopoulos and Kaimanovich(2011)}]{Georgakopoulos:2011:ENR}
\bibinfo{author}{Georgakopoulos, A.}, \bibinfo{author}{Kaimanovich, V.},
  \bibinfo{year}{2011}.
\newblock \bibinfo{title}{Electrical network reduction with a probabilistic
  interpretation of effective conductance}.
\newblock
  \bibinfo{howpublished}{\url{https://www.math.tugraz.at/~agelos/Ceff.pdf}}.
\bibitem[{Golub and Van~Loan(2013)}]{Golub:2013:MC}
\bibinfo{author}{Golub, G.H.}, \bibinfo{author}{Van~Loan, C.F.},
  \bibinfo{year}{2013}.
\newblock \bibinfo{title}{Matrix computations.}
\newblock \bibinfo{edition}{4th ed.} ed., \bibinfo{publisher}{Baltimore, MD:
  The Johns Hopkins University Press}.
\bibitem[{Guevara~Vasquez et~al.(2009)Guevara~Vasquez, Milton and
  Onofrei}]{Guevara:2009:AEC}
\bibinfo{author}{Guevara~Vasquez, F.}, \bibinfo{author}{Milton, G.W.},
  \bibinfo{author}{Onofrei, D.}, \bibinfo{year}{2009}.
\newblock \bibinfo{title}{Active exterior cloaking for the 2d {L}aplace and
  {H}elmholtz equations}.
\newblock \bibinfo{journal}{Phys. Rev. Lett.} \bibinfo{volume}{103},
  \bibinfo{pages}{073901}.
\newblock \DOIprefix\doi{10.1103/PhysRevLett.103.073901}.
\bibitem[{Guevara~Vasquez et~al.(2011)Guevara~Vasquez, Milton and
  Onofrei}]{Guevara:2011:ECA}
\bibinfo{author}{Guevara~Vasquez, F.}, \bibinfo{author}{Milton, G.W.},
  \bibinfo{author}{Onofrei, D.}, \bibinfo{year}{2011}.
\newblock \bibinfo{title}{Exterior cloaking with active sources in two
  dimensional acoustics}.
\newblock \bibinfo{journal}{Wave Motion} \bibinfo{volume}{48},
  \bibinfo{pages}{515--524}.
\newblock \DOIprefix\doi{10.1016/j.wavemoti.2011.03.005},
  \href{http://arxiv.org/abs/1009.2038}{{\tt arXiv:1009.2038}}.
  \bibinfo{note}{special Issue on Cloaking of Wave Motion}.
\bibitem[{Guevara~Vasquez et~al.(2012)Guevara~Vasquez, Milton and
  Onofrei}]{Guevara:2012:MAT}
\bibinfo{author}{Guevara~Vasquez, F.}, \bibinfo{author}{Milton, G.W.},
  \bibinfo{author}{Onofrei, D.}, \bibinfo{year}{2012}.
\newblock \bibinfo{title}{Mathematical analysis of the two dimensional active
  exterior cloaking in the quasistatic regime}.
\newblock \bibinfo{journal}{Analysis and Mathematical Physics}
  \bibinfo{volume}{2}, \bibinfo{pages}{231--246}.
\newblock \DOIprefix\doi{10.1007/s13324-012-0031-8},
  \href{http://arxiv.org/abs/1109.3526}{{\tt arXiv:1109.3526}}.
\bibitem[{Guevara~Vasquez et~al.(2013)Guevara~Vasquez, Milton, Onofrei and
  Seppecher}]{Guevara:2013:TEA}
\bibinfo{author}{Guevara~Vasquez, F.}, \bibinfo{author}{Milton, G.W.},
  \bibinfo{author}{Onofrei, D.}, \bibinfo{author}{Seppecher, P.},
  \bibinfo{year}{2013}.
\newblock \bibinfo{title}{Transformation elastodynamics and active exterior
  cloaking}, in: \bibinfo{editor}{Craster, R.V.}, \bibinfo{editor}{Guenneau,
  S.} (Eds.), \bibinfo{booktitle}{Acoustic metamaterials: Negative refraction,
  imaging, lensing and cloaking}. \bibinfo{publisher}{Springer}.
\newblock \DOIprefix\doi{10.1007/978-94-007-4813-2_12},
  \href{http://arxiv.org/abs/1105.1221}{{\tt arXiv:1105.1221}}.
\bibitem[{G{\"u}rlebeck and Hommel(2002)}]{Gurlebeck:2002:FDP}
\bibinfo{author}{G{\"u}rlebeck, K.}, \bibinfo{author}{Hommel, A.},
  \bibinfo{year}{2002}.
\newblock \bibinfo{title}{On finite difference potentials and their
  applications in a discrete function theory}.
\newblock \bibinfo{journal}{Math. Methods Appl. Sci.} \bibinfo{volume}{25},
  \bibinfo{pages}{1563--1576}.
\newblock \DOIprefix\doi{10.1002/mma.389}.
\bibitem[{Lai et~al.(2009)Lai, Ng, Chen, Han, Xiao, Zhang and
  Chan}]{Lai:2009:IOT}
\bibinfo{author}{Lai, Y.}, \bibinfo{author}{Ng, J.}, \bibinfo{author}{Chen,
  H.}, \bibinfo{author}{Han, D.}, \bibinfo{author}{Xiao, J.},
  \bibinfo{author}{Zhang, Z.Q.}, \bibinfo{author}{Chan, C.T.},
  \bibinfo{year}{2009}.
\newblock \bibinfo{title}{Illusion optics: The optical transformation of an
  object into another object}.
\newblock \bibinfo{journal}{Phys. Rev. Lett.} \bibinfo{volume}{102},
  \bibinfo{pages}{253902}.
\newblock \DOIprefix\doi{10.1103/PhysRevLett.102.253902}.
\bibitem[{Lyons and Peres(2016)}]{Lyons:2016:PTN}
\bibinfo{author}{Lyons, R.}, \bibinfo{author}{Peres, Y.}, \bibinfo{year}{2016}.
\newblock \bibinfo{title}{Probability on trees and networks}.
  volume~\bibinfo{volume}{42} of \textit{\bibinfo{series}{Camb. Ser. Stat.
  Probab. Math.}}
\newblock \bibinfo{publisher}{Cambridge: Cambridge University Press}.
\newblock \DOIprefix\doi{10.1017/9781316672815}.
\bibitem[{Martinsson and Rodin(2009)}]{Gunnar:2009:BAE}
\bibinfo{author}{Martinsson, P.G.}, \bibinfo{author}{Rodin, G.J.},
  \bibinfo{year}{2009}.
\newblock \bibinfo{title}{Boundary algebraic equations for lattice problems}.
\newblock \bibinfo{journal}{Proceedings of the Royal Society A: Mathematical,
  Physical and Engineering Sciences} \bibinfo{volume}{465},
  \bibinfo{pages}{2489--2503}.
\newblock \DOIprefix\doi{10.1098/rspa.2008.0473}.
\bibitem[{Miller(2006)}]{Miller:2005:OPC}
\bibinfo{author}{Miller, D.A.B.}, \bibinfo{year}{2006}.
\newblock \bibinfo{title}{On perfect cloaking}.
\newblock \bibinfo{journal}{Opt. Express} \bibinfo{volume}{14},
  \bibinfo{pages}{12457--12466}.
\newblock \DOIprefix\doi{10.1364/OE.14.012457}.
\bibitem[{Norris et~al.(2012)Norris, Amirkulova and Parnell}]{Norris:2012:SAF}
\bibinfo{author}{Norris, A.N.}, \bibinfo{author}{Amirkulova, F.A.},
  \bibinfo{author}{Parnell, W.J.}, \bibinfo{year}{2012}.
\newblock \bibinfo{title}{Source amplitudes for active exterior cloaking}.
\newblock \bibinfo{journal}{Inverse Problems} \bibinfo{volume}{28},
  \bibinfo{pages}{105002}.
\newblock \DOIprefix\doi{10.1088/0266-5611/28/10/105002}.
\bibitem[{Norris et~al.(2014)Norris, Amirkulova and Parnell}]{Norris:2014:AEC}
\bibinfo{author}{Norris, A.N.}, \bibinfo{author}{Amirkulova, F.A.},
  \bibinfo{author}{Parnell, W.J.}, \bibinfo{year}{2014}.
\newblock \bibinfo{title}{Active elastodynamic cloaking}.
\newblock \bibinfo{journal}{Math. Mech. Solids} \bibinfo{volume}{19},
  \bibinfo{pages}{603--625}.
\newblock \DOIprefix\doi{10.1177/1081286513479962}.
\bibitem[{O'Neill et~al.(2015)O'Neill, Selsil, McPhedran, Movchan and
  Movchan}]{ONeill:2015:ACO}
\bibinfo{author}{O'Neill, J.}, \bibinfo{author}{Selsil, O.},
  \bibinfo{author}{McPhedran, R.C.}, \bibinfo{author}{Movchan, A.B.},
  \bibinfo{author}{Movchan, N.V.}, \bibinfo{year}{2015}.
\newblock \bibinfo{title}{Active cloaking of inclusions for flexural waves in
  thin elastic plates}.
\newblock \bibinfo{journal}{Quart. J. Mech. Appl. Math.} \bibinfo{volume}{68},
  \bibinfo{pages}{263--288}.
\newblock \DOIprefix\doi{10.1093/qjmam/hbv007}.
\bibitem[{O'Neill et~al.(2016)O'Neill, Selsil, McPhedran, Movchan, Movchan and
  Henderson~Moggach}]{McPhedran:2016:ACO}
\bibinfo{author}{O'Neill, J.}, \bibinfo{author}{Selsil, O.},
  \bibinfo{author}{McPhedran, R.C.}, \bibinfo{author}{Movchan, A.B.},
  \bibinfo{author}{Movchan, N.V.}, \bibinfo{author}{Henderson~Moggach, C.},
  \bibinfo{year}{2016}.
\newblock \bibinfo{title}{Active cloaking of resonant coated inclusions for
  waves in membranes and {K}irchhoff plates}.
\newblock \bibinfo{journal}{Quart. J. Mech. Appl. Math.} \bibinfo{volume}{69},
  \bibinfo{pages}{115--159}.
\newblock \DOIprefix\doi{10.1093/qjmam/hbw001}.
\bibitem[{Ryaben'Kii(2001)}]{Ryabenkii:2001:MDP}
\bibinfo{author}{Ryaben'Kii, V.S.}, \bibinfo{year}{2001}.
\newblock \bibinfo{title}{Method of difference potentials and its
  applications}. volume~\bibinfo{volume}{30}.
\newblock \bibinfo{publisher}{Springer Science \& Business Media}.
\bibitem[{Steinbach(2008)}]{Steinbach:2008:NAM}
\bibinfo{author}{Steinbach, O.}, \bibinfo{year}{2008}.
\newblock \bibinfo{title}{Numerical approximation methods for elliptic boundary
  value problems. {Finite} and boundary elements.}
\newblock \bibinfo{publisher}{New York, NY: Springer}.
\newblock \DOIprefix\doi{10.1007/978-0-387-68805-3}.
\bibitem[{Zheng et~al.(2010)Zheng, Xiao, Lai and Chan}]{Zheng:2010:EOC}
\bibinfo{author}{Zheng, H.H.}, \bibinfo{author}{Xiao, J.J.},
  \bibinfo{author}{Lai, Y.}, \bibinfo{author}{Chan, C.T.},
  \bibinfo{year}{2010}.
\newblock \bibinfo{title}{Exterior optical cloaking and illusions by using
  active sources: A boundary element perspective}.
\newblock \bibinfo{journal}{Phys. Rev. B} \bibinfo{volume}{81},
  \bibinfo{pages}{195116}.
\newblock \DOIprefix\doi{10.1103/PhysRevB.81.195116}.

\end{thebibliography}
\end{document}